\documentclass[11pt]{amsart}

\usepackage[a4paper,hmargin=3.5cm,vmargin=3.5cm]{geometry}
\usepackage{amsfonts,amssymb,amscd,amstext}
\usepackage{graphicx}
\usepackage[dvips]{epsfig}

\usepackage{fancyhdr}
\pagestyle{fancy}
\fancyhf{}

\usepackage{times}

\usepackage{enumerate}
\usepackage{titlesec}
\usepackage{mathrsfs}

\pretolerance=2000
\tolerance=3000

\def\Ccal{\mathcal{C}}

\def\Ncal{\mathcal{N}}
\def\Mcal{\mathcal{M}}

\def\Hcal{\mathcal{H}}
\def\Bcal{\mathcal{B}}
\def\Lcal{\mathcal{L}}
\def\Dcal{\mathcal{D}}

\def\Scal{\mathcal{S}}

\def\Fr{\mathrm{Fr}}
\def\dist{\mathrm{dist}}

\def\Tsf{\mathsf{T}}
\def\asf{\mathsf{a}}

\def\Bscr{\mathscr{B}}
\def\I{\mathfrak{I}}

\def\c{\mathbb{C}}
\def\r{\mathbb{R}}
\def\n{\mathbb{N}}
\def\z{\mathbb{Z}}

\def\s{\mathbb{S}}

\def\Agot{\mathfrak{A}}
\def\Bgot{\mathfrak{B}}
\def\Pgot{\mathfrak{P}}
\def\Fgot{\mathfrak{F}}
\def\Ogot{\mathfrak{O}}

\def\ggot{\mathfrak{g}}
\def\pgot{\mathfrak{p}}
\def\igot{\mathfrak{i}}
\def\jgot{\mathfrak{j}}

\def\dgot{\mathfrak{d}}

\newcommand{\sub}[1]{\text{\b{$#1$}}}


\headheight=13pt
\headsep 0.5cm
\topmargin 0.5cm
\textheight = 49\baselineskip
\textwidth 14cm
\oddsidemargin 1cm
\evensidemargin 1cm

\setlength{\parskip}{0.5em}

\titleformat{\section}
{\filcenter\bfseries\large} {\thesection{.}}{0.2cm}{}
\titleformat{\subsection}[runin]
{\bfseries} {\thesubsection{.}}{0.15cm}{}[.]
\titleformat{\subsubsection}[runin]
{\em}{\thesubsubsection{.}}{0.15cm}{}[.]

\usepackage[up,bf]{caption}

\newtheorem{theorem}{Theorem}[section]

\newtheorem{claim}[theorem]{Claim}
\newtheorem{lemma}[theorem]{Lemma}

\newtheorem{remark}[theorem]{Remark}
\newtheorem{definition}[theorem]{Definition}

\theoremstyle{definition}




\numberwithin{equation}{section}
\numberwithin{figure}{section}

\usepackage{color}

\begin{document}

\fancyhead[LO]{Complete non-orientable minimal surfaces and asymptotic behavior} 
\fancyhead[RE]{A. Alarc\'{o}n$\,$  and$\,$ F.J. L\'{o}pez} 
\fancyhead[RO,LE]{\thepage} 

\thispagestyle{empty}

\thispagestyle{empty}

\vspace*{1cm}
\begin{center}
{\bf\LARGE Complete non-orientable minimal surfaces in $\r^3$\\ and asymptotic behavior}

\vspace*{0.5cm}

{\large\bf Antonio Alarc\'{o}n$\;$ and$\;$ Francisco J.\ L\'{o}pez}

\end{center}

\footnote[0]{\vspace*{-0.4cm}

\noindent A.\ Alarc\'{o}n, F.\ J.\ L\'{o}pez

\noindent Departamento de Geometr\'{\i}a y Topolog\'{\i}a, Universidad de Granada, E-18071 Granada, Spain.

\noindent e-mail: {\tt alarcon@ugr.es}, {\tt fjlopez@ugr.es}

\vspace*{0.1cm}

\noindent A.\ Alarc\'{o}n is supported by Vicerrectorado de Pol\'{i}tica Cient\'{i}fica e Investigaci\'{o}n de la Universidad de Granada.

\noindent A.\ Alarc\'{o}n and F.\ J.\ L\'{o}pez's research is partially supported by MCYT-FEDER research project MTM2011-22547 and Junta de Andaluc\'{\i}a Grant P09-FQM-5088.
}


\begin{quote}
{\small
\noindent {\bf Abstract}$\;$ In this paper we give new existence results for complete non-orientable minimal surfaces in $\r^3$ with prescribed topology and asymptotic behavior.

\vspace*{0.1cm}



\noindent{\bf Mathematics Subject Classification (2010)}$\;$ 49Q05.
}
\end{quote}

\vspace*{0.25cm}


\section{Introduction}\label{sec:intro}

Non-orientable surfaces appear quite naturally in the origin itself of Minimal Surface theory and present a rich and interesting geometry.

This is part of a series of papers devoted to exploit the {\em Runge-Mergelyan type approximation theorem for non-orientable minimal surfaces}, furnished by the authors in \cite{AL-RMnon}, in order to construct non-orientable minimal surfaces in $\r^3$ with involved geometries.

The first main result of this paper concerns complete non-orientable minimal surfaces in $\r^3$ spanning a finite collection of closed curves.

\begin{theorem}\label{th:intro-compact}
Let $\Scal_0$ be an open non-orientable smooth surface with finite topology.

Then there exist a relatively compact domain $\Scal$ in $\Scal_0$ and a continuous map $X\colon \overline\Scal\to\r^3$ such that $\Scal$ is homeomorphic to $\Scal_0$, the Hausdorff dimension of $X(\overline \Scal\setminus\Scal)$ equals $1$, and the restriction $X|_{\Scal}\colon \Scal\to\r^3$ is a complete minimal immersion. 

Furthermore, the flux of the immersion $X|_{\Scal}$ can be arbitrarily prescribed.
\end{theorem}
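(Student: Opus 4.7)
The plan is to reduce the construction to an $I$-equivariant problem on the oriented two-sheeted conformal cover of $\Scal_0$, and to run there a Nadirashvili--type recursion based on the Runge--Mergelyan theorem of \cite{AL-RMnon}. Let $\pi\colon \widetilde\Scal_0\to\Scal_0$ denote this double cover, with antiholomorphic fixed-point-free deck involution $I$. A continuous map $X\colon \overline\Scal\to\r^3$ with $X|_\Scal$ a conformal minimal immersion corresponds bijectively to a continuous $I$-invariant map $\widetilde X\colon \overline{\widetilde\Scal}\to \r^3$ with $\widetilde X|_{\widetilde\Scal}$ conformal minimal, where $\widetilde\Scal=\pi^{-1}(\Scal)$. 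Throughout the construction everything is kept $I$-symmetric, and the prescribed flux is encoded in the initial Weierstrass data and preserved by the approximation.

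Fix a base point $p_0\in\widetilde\Scal_0$, a summable sequence $\varepsilon_n\downarrow 0$, and an $I$-invariant exhaustion $K_1\Subset K_2\Subset\cdots$ of $\widetilde\Scal_0$ by smoothly bounded compact Runge sets. Starting from any $I$-invariant conformal minimal immersion $\widetilde X_1$ defined near $K_1$ and carrying the prescribed flux (obtained directly from $I$-equivariant Weierstrass data), I build inductively $I$-invariant conformal minimal immersions $\widetilde X_n$ defined near $K_n$ with
\[
\|\widetilde X_n-\widetilde X_{n-1}\|_{K_{n-1}}<\varepsilon_n,\qquad \dist_{\widetilde X_n}(p_0,\partial K_n)>n,
\]
and such that $\widetilde X_n(K_n\setminus K_{n-1})$ is contained in a $\delta_n$-tubular neighborhood of a finite polygonal graph $\Gamma_n\subset\r^3$, the parameters $\delta_n$ and $|\Gamma_n|$ being chosen so that $\sum_n\delta_n|\Gamma_n|<\infty$. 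The passage from $\widetilde X_{n-1}$ to $\widetilde X_n$ is produced by a classical L\'opez--Ros type deformation applied on an $I$-invariant labyrinth placed in the region $K_n\setminus K_{n-1}$ to inflate the induced metric, after which the $I$-equivariant Runge--Mergelyan theorem of \cite{AL-RMnon} turns this local deformation into a globally defined $I$-invariant conformal minimal immersion on a neighborhood of $K_n$, close to $\widetilde X_{n-1}$ on $K_{n-1}$ and with unchanged flux.

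Summability of $(\varepsilon_n)$ yields a uniform-on-compacta limit $\widetilde X$ on $\widetilde\Scal=\bigcup_n K_n$; the tubular containments force the oscillation of $\widetilde X_n$ on each region $K_n\setminus K_{n-1}$ to be small, from which $\widetilde X$ extends continuously to $\partial\widetilde\Scal$. The distance growth condition gives completeness on $\widetilde\Scal$. The boundary limit set lies, for every $N$, in $\bigcup_{n\ge N}\overline{T_{\delta_n}(\Gamma_n)}$, hence has finite one-dimensional Hausdorff measure by a standard covering estimate, giving $\dim_H\le 1$; the reverse inequality holds because the limit set contains a non-degenerate continuum in $\r^3$. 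Passing to the $I$-quotient yields $\Scal=\pi(\widetilde\Scal)$, relatively compact in $\Scal_0$ and homeomorphic to it, together with $X\colon\overline\Scal\to\r^3$ satisfying all the required properties.

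The main obstacle is the simultaneous fulfillment, at every inductive step, of three competing constraints: (i) the intrinsic distance growth forcing completeness; (ii) the tubular containment forcing the Hausdorff dimension bound; (iii) the $I$-symmetry of the entire construction, which essentially doubles the geometric book-keeping. Designing the labyrinths and tuning the parameters $(\delta_n,|\Gamma_n|,\varepsilon_n)$ so that the $I$-equivariant Runge--Mergelyan theorem of \cite{AL-RMnon} applies and all three requirements close up is the technical heart of the proof.
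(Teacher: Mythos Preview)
Your outline has a structural error that makes the construction produce the wrong object. You take an \emph{increasing} exhaustion $K_1\Subset K_2\Subset\cdots$ of the whole cover $\widetilde\Scal_0$, so $\widetilde\Scal=\bigcup_n K_n=\widetilde\Scal_0$ and hence $\Scal=\pi(\widetilde\Scal)=\Scal_0$. This is not relatively compact in $\Scal_0$, and its frontier in $\Scal_0$ is empty; there is no set $\overline\Scal\setminus\Scal$ to which $X$ could extend or whose Hausdorff dimension one could discuss. The theorem asks for a domain strictly inside $\Scal_0$, with the map continuous up to a genuine frontier inside the ambient surface. The correct architecture (and the one the paper uses) is the opposite: fix once and for all an $\I$-invariant bordered domain $U\Subset\Ncal$ with $U$ homeomorphic to $\Ncal$, and build a \emph{decreasing} sequence $M_n\Subset M_{n-1}\Subset U$ together with $X_n\in\Mcal_\I(\overline{M_n})$; the limit domain $M$ is then relatively compact and $\Fr M\subset U$ is the boundary set carrying the Hausdorff-dimension statement.

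There is also a genuine gap at the inductive step. You assert that a L\'opez--Ros/labyrinth deformation produces both $\dist_{\widetilde X_n}(p_0,\partial K_n)>n$ and the tubular containment $\widetilde X_n(K_n\setminus K_{n-1})\subset T_{\delta_n}(\Gamma_n)$ simultaneously, but Nadirashvili-type labyrinths only give bounded image, not confinement in $\delta_n$-tubes around $1$-dimensional graphs with $\sum\delta_n|\Gamma_n|<\infty$. The mechanism the paper uses to reconcile large intrinsic growth with tiny extrinsic drift is different and is the technical core: at each boundary point one chooses a direction $e^3$ pointing from a reference curve $\Fgot$ toward the current immersion, and then one deforms only in the plane $\{e^3\}^\perp$ (via Theorem~\ref{th:Mergelyan}(II), fixing the third coordinate). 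This yields the Pythagorean estimate $\|Y-\Fgot\circ\Pgot\|<\sqrt{4\rho^2+\mu^2}+\epsilon$ of Lemma~\ref{lem:compact}, so that adding intrinsic length $\rho$ costs only $\sqrt{4\rho^2+\mu^2}-\mu$ extrinsically. Iterating with $\rho_n\sim 1/n$ gives $\sum\rho_n=\infty$ (completeness) while $\mu_n$ stays bounded (continuous boundary extension and the dimension-$1$ estimate). Your sketch does not supply any substitute for this trade-off, and without it the three competing constraints you list at the end cannot be closed.
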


A map $X$ as those given by Theorem \ref{th:intro-compact} is said to be a {\em non-orientable compact complete minimal immersion}. The ones in the above theorem are the first examples of such immersions in the literature. 

We point out that our method does not give control over the topology of $\overline\Scal\setminus \Scal$. In particular, we can not ensure that $\overline\Scal\setminus \Scal$ consists of a finite collection of Jordan curves (see Remark \ref{rem:huuhuesr} below).

In the orientable setting, compact complete minimal immersions of the disc into $\r^3$ were constructed by Mart\'in and Nadirashvili \cite{MN-Jordan}; examples with arbitrary finite topology were given later by Alarc\'on \cite{Alarcon-Compact}.  Other related results can be found in \cite{AN,Alarcon-Compact2}. The construction methods used in \cite{MN-Jordan,Alarcon-Compact} are refinements of Nadirashvili's technique for constructing complete bounded minimal surfaces in $\r^3$; see \cite{Nadirashvili}. In the context of null holomorphic curves in $\c^3$ (i.e., holomorphic immersions from Riemann surfaces into $\c^3$ whose real and imaginary parts are conformal minimal immersions), Alarc\'on and L\'opez \cite{AL-Israel} gave compact complete examples with any given finite topological type. Their method, which relies on the {\em Runge-Mergelyan  theorem for null curves in $\c^3$} (see \cite{AL-proper}), is the inspiration of our proof.

Compact complete minimal surfaces in $\r^3$ are interesting objects since they lie in the intersection of two well known topics on minimal surface theory: the {\em Plateau problem} (dealing with the existence of {\em compact} minimal surfaces spanning a given family of closed curves in $\r^3$) and the {\em Calabi-Yau problem} (concerning the existence of {\em complete} minimal surfaces in bounded regions of $\r^3$). See the already cited sources and references therein for a more detailed discussion.

The second main result of this paper regards with complete non-orientable minimal surfaces in $\r^3$ properly projecting into planar convex domains.

\begin{theorem}\label{th:intro-proper}
Let $\Scal$ be an open non-orientable smooth surface (possibly with infinite topology) and let $\Dcal\subset\r^2$ be a convex domain.

Then there exists a complete minimal immersion $X=(X_1,X_2,X_3)\colon \Scal\to\r^3$ such that $(X_1,X_2)(\Scal)\subset \Dcal$ and $(X_1,X_2)\colon \Scal\to \Dcal$ is a proper map.

Furthermore, the flux of the immersion $X$ can be arbitrarily prescribed.
\end{theorem}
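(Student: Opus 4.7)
The plan is to reduce to the orientable setting via the canonical two-sheeted oriented covering $\pi\colon\Ncal\to\Scal$, equipped with its fixed-point-free antiholomorphic deck involution $\I\colon\Ncal\to\Ncal$. Conformal minimal immersions $X\colon\Scal\to\r^3$ correspond bijectively to $\I$-invariant conformal minimal immersions $Y=X\circ\pi\colon\Ncal\to\r^3$, and the Runge--Mergelyan type theorem for non-orientable minimal surfaces \cite{AL-RMnon} provides Mergelyan approximation by such $\I$-invariant objects with prescribed flux. All constructions below are to be carried out $\I$-equivariantly on $\Ncal$ and will descend to $\Scal$.

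First I would fix exhaustions: a nested sequence $\Scal_1\Subset\Scal_2\Subset\cdots$ of relatively compact, smoothly bounded, Runge-type subdomains of $\Scal$ with $\bigcup_n\Scal_n=\Scal$, lifted to the $\I$-invariant exhaustion $\Ncal_n=\pi^{-1}(\Scal_n)$ of $\Ncal$, and an exhaustion $K_1\Subset K_2\Subset\cdots$ of $\Dcal$ by smoothly bounded compact convex sets with $\bigcup_n K_n=\Dcal$. Also fix a base point $p_0\in\Scal_1$. The goal is to build inductively a sequence of $\I$-invariant conformal minimal immersions $X_n=(X_n^1,X_n^2,X_n^3)$ defined on a neighborhood of $\Ncal_n$ with the prescribed flux on loops inside $\Ncal_n$, such that (a) $\|X_n-X_{n-1}\|_{\Ncal_{n-1}}<\varepsilon_n$ with $\sum_n\varepsilon_n<\infty$; (b) $(X_n^1,X_n^2)(\Ncal_n)\subset K_n$ and $(X_n^1,X_n^2)(\partial\Ncal_n)\subset K_n\setminus K_{n-1}$; (c) $\dist_{X_n}(p_0,\partial\Ncal_n)>n$.

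The inductive step from $X_n$ to $X_{n+1}$ will be carried out by two successive $\I$-equivariant deformations, each produced by one application of the Runge--Mergelyan theorem of \cite{AL-RMnon}: first a \emph{horizontal pushing} which extends $X_n$ across the collar $\Ncal_{n+1}\setminus\Ncal_n$ and moves the boundary image $(X_{n+1}^1,X_{n+1}^2)(\partial\Ncal_{n+1})$ into the convex annulus $K_{n+1}\setminus K_n$; and second a Nadirashvili-type \emph{labyrinth deformation} on the collar that stretches the intrinsic lengths of all divergent paths so as to force $\dist(p_0,\partial\Ncal_{n+1})>n+1$. Convexity of $K_{n+1}$ is essential in both steps: it allows a straight-line push in the $(X^1,X^2)$-plane and bounds the horizontal excursion of the labyrinth perturbation, so that $(X_{n+1}^1,X_{n+1}^2)(\Ncal_{n+1})$ stays inside $K_{n+1}$. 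Prescribing the flux is folded into each Mergelyan approximation via the freedom in the period conditions afforded by \cite{AL-RMnon}.

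Passing to the limit, (a) yields a conformal minimal immersion $X=(X_1,X_2,X_3)\colon\Scal\to\r^3$; (b) gives $(X_1,X_2)(\Scal)\subset\Dcal$ and, since the horizontal image of $\partial\Ncal_n$ is pinned in an annular shell controlled up to $\sum_{k>n}\varepsilon_k$, ensures that $(X_1,X_2)^{-1}(K_n)$ is compact in $\Scal$, hence $(X_1,X_2)\colon\Scal\to\Dcal$ is proper; (c) guarantees completeness of $X$; and the flux is as prescribed. The hardest part will be executing both deformations $\I$-equivariantly while simultaneously controlling the horizontal projection---the labyrinth construction must not destroy the containment of $(X^1,X^2)$ in $K_{n+1}$, and the horizontal push must not collapse the intrinsic stretching. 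Convexity of $\Dcal$ and the $\I$-symmetric approximation tool of \cite{AL-RMnon} are precisely what make this simultaneous control feasible.
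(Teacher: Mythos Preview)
Your outline matches the paper's strategy: pass to the oriented double cover $(\Ncal,\I)$, exhaust both $\Ncal$ and $\Dcal$, build the immersion recursively by an $\I$-equivariant Mergelyan step combining a horizontal push with an intrinsic-length stretching, and pass to the limit. Two points deserve comment.

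First, the paper reverses your order of the two deformations. In the key Lemma~\ref{lem:proper} it \emph{first} applies the completeness-stretching Lemma~\ref{lem:compact2} (which shrinks the working domain while keeping the $\Ccal^0$ distance to the original immersion arbitrarily small) and only \emph{afterwards} pushes the boundary projection outward into the next convex shell. This ordering is cleaner because the push, which may be $\Ccal^0$-large, takes place on discs lying entirely outside the already-stretched region $\overline M$, so it cannot undo the intrinsic length gain. Your order is also viable, but it forces the stretching step to carry the $\Ccal^0$-smallness of Lemma~\ref{lem:compact2} (items (v) and (vi) there) rather than a generic Nadirashvili labyrinth, precisely so that the annular containment of the boundary projection survives.

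Second, and more substantively: the stretching step necessarily \emph{shrinks} the domain, so you cannot keep $X_n$ defined on your fixed exhaustion $\Ncal_n=\pi^{-1}(\Scal_n)$ as written. The paper handles this by carrying a variable bordered domain $M_n$ with $M_{n-1}\Subset M_n\Subset U_n$ (see the proof of Theorem~\ref{th:proper}), and the limit immersion lives on $M=\bigcup_n M_n$, which is only \emph{homeomorphic} to $\Ncal$, in general a proper subdomain. This is harmless for Theorem~\ref{th:intro-proper} since $\Scal$ is given only as a smooth surface and one simply endows it with the conformal structure of $M/\I$; but your assertion that the final $X$ is defined on the initially chosen $\Scal$ (with whatever conformal structure the cover $\Ncal$ induces) overstates what the method actually delivers.
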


The problem of whether there exist minimal surfaces in $\r^3$ with {\em hyperbolic} conformal structure and {\em properly projecting} into $\r^2$ was proposed by Schoen and Yau \cite{SchoenYau-harmonic}. (Recall that an open Riemann surface is said to be hyperbolic if it carries non-constant negative subharmonic functions; otherwise it is said to be {\em parabolic}.) This question was settled in the affirmative by the authors in both the orientable and the non-orientable settings \cite{AL-proper,AL-conjugada,AL-RMnon}. More specifically,  such surfaces with any given conformal structure and flux map were provided. Theorem \ref{th:intro-proper} shows that the corresponding result for {\em complete} non-orientable surfaces and {\em convex domains} of $\r^2$ holds as well; cf.\ \cite{AL-CY} for the orientable case. On the other hand, Ferrer, Mart\'in, and Meeks \cite{FerrerMartinMeeks} provided complete non-orientable minimal surfaces, with arbitrary topology, properly immersed in any given convex domain $\Omega$ of $\r^3$; however, if the domain $\Omega$ is a right cylinder over a convex domain $\Dcal$ of $\r^2$, their method does not provide any information about the projection of the surface into $\Dcal$. 

Although our techniques are inspired by those already developed in the orientable setting (cf.\ \cite{AL-Israel,AL-CY}), the non-orientable character of the surfaces requires a much more careful discussion. Indeed, every non-orientable minimal surface $S$ in $\r^3$ can be represented by a triple $(\Ncal,\I,X)$, where $\Ncal$ is an open Riemann surface, $\I\colon\Ncal\to\Ncal$ is an antiholomorphic involution without fixed points, and $X\colon\Ncal\to\r^3$ is a conformal minimal immersion satisfying 
\begin{equation}\label{eq:compa}
X\circ \I=X
\end{equation}
and $S=X(\Ncal)$; see Subsec.\ \ref{sec:minimal} for details. The moduli space of open Riemann surfaces admitting an antiholomorphic involution without fixed points is real analytic and rather subtle; as a matter of fact this condition implies not only topological restrictions on the surfaces but also conformal ones. Moreover, the required compatibility \eqref{eq:compa} with respect to the antiholomorphic involution makes the construction of non-orientable minimal surfaces a much more involved problem. In order to overcome these difficulties, we exploit the Runge-Mergelyan theorem for non-orientable minimal surfaces \cite{AL-RMnon} (see Theorem \ref{th:Mergelyan} below). This flexible tool enables us to obtain the examples in Theorems \ref{th:intro-compact} and \ref{th:intro-proper} as limit of sequences of compact non-orientable minimal surfaces (with non-empty boundary), considerably simplifying the construction methods in Sec.\ \ref{sec:theorem} and Sec.\ \ref{sec:theorem2}.


\subsection*{Outline of the paper} In Sec.\ \ref{sec:prelim} we introduce the background and notation about Riemann surfaces, non-orientable minimal surfaces, and convex domains, that will be needed throughout the paper. In particular, we state the Runge-Mergelyan theorem for non-orientable minimal surfaces \cite{AL-RMnon}; see Theorem \ref{th:Mergelyan}. With this approximation result in hand, Theorems \ref{th:intro-compact} and \ref{th:intro-proper} are proved in Sec.\ \ref{sec:theorem} and Sec.\ \ref{sec:theorem2}, respectively; see the more general Theorems \ref{th:compact} and \ref{th:proper}.


%
%


\section{Preliminaries}\label{sec:prelim}

We denote by $\|\cdot\|$, $\langle\cdot,\cdot\rangle$, and $\dist(\cdot,\cdot)$ the Euclidean norm, metric, and distance in $\r^n$, $n\in\n$. Given a compact topological space $K$ and a continuous map $f\colon K\to \r^n,$ we denote by 
\begin{equation}\label{eq:norma00}
\|f\|_{0,K}:= \max \big\{ \|f(p)\|\colon p\in K \big\}
\end{equation}
the maximum norm of $f$ on $K.$ The corresponding space of continuous functions on $K$ will be endowed with the $\Ccal^0$ topology associated to $\|\cdot\|_{0,K}.$

Given a topological surface $N,$ we denote by $b N$ the (possibly non-connected) $1$-dimensional topological manifold determined by its boundary points. Open connected subsets of $N\setminus b N$ will be called {\em domains}. Proper connected topological subspaces of $N\setminus bN$ being compact surfaces with boundary will be said {\em regions}. For any subset $A \subset N,$ we denote by $A^\circ$, $\overline{A}$, and $\Fr A=\overline{A}\setminus A^\circ$, the interior, the closure, and the topological frontier of $A$ in $N$, respectively.  Given subsets $A$, $B$ of $N$, we say that $A \Subset B$ if $\overline{A}$ is compact and $\overline{A}\subset B^\circ$.


\subsection{Riemann surfaces and non-orientability}\label{sec:riemann}

A Riemann surface $\Ncal$ is said {\em open} if it is non-compact and $b \Ncal =\emptyset.$ For such $\Ncal$, we denote by $\partial$ the global complex operator given by $\partial|_U=\frac{\partial}{\partial z} dz$ for any conformal chart $(U,z)$ on $\Ncal.$ 

Riemann surfaces are orientable; the conformal structure of a Riemann surface induces a (positive) orientation on it. The natural notion of {\em non-orientable Riemann surface} is described as follows; see \cite{Meeks-8pi,AL-RMnon} for a detailed exposition of this issue.

\begin{definition}\label{def:non}
By a {\em non-orientable Riemann surface} we mean an orbit space $\Ncal/\I$; where $\Ncal$ is an open Riemann surface and $\I\colon\Ncal\to\Ncal$ is an antiholomorphic involution without fixed points. Therefore, a non-orientable Riemann surface is identified with an open Riemann surface $\Ncal$ equipped with an antiholomorphic involution $\I$ without fixed points.

In this setting, $\Ncal$ is the two-sheets conformal orientable cover of $\Ncal/\I$. We denote by $\pi\colon \Ncal \to \Ncal/\I$ the natural projection.
Further, $\Ncal$ carries conformal Riemannian metrics $\sigma_\Ncal^2$ such that $\I^*(\sigma_\Ncal^2)=\sigma_\Ncal^2.$
\end{definition}

From now on in this section, let $\Ncal$, $\I$, $\pi$, and $\sigma_\Ncal^2$ be as in Def.\ \ref{def:non}.

\begin{definition}\label{def:invariant}
A subset $A\subset \Ncal$ is said to be $\I$-invariant if  $\I(A)=A$.

For an $\I$-invariant set  $A\subset\Ncal$, a map $f\colon A\to\r^n$, $n\in\n$, is said to be {\em $\I$-invariant} if $f\circ \I|_A=f$.
\end{definition}

Let $\Gamma\subset\Ncal$ be an $\I$-invariant subset consisting of finitely many pairwise disjoint smooth Jordan curves $\gamma_j,$ $j=1,\ldots,k.$ For any $\epsilon>0$ we denote by
\[
\Tsf_\epsilon(\Gamma):=\{P \in \Ncal\colon \dist_{\sigma_\Ncal^2}(P,\Gamma)<\epsilon\};
\]
where $\dist_{\sigma_\Ncal^2}$ means Riemannian distance in $(\Ncal,\sigma_\Ncal^2).$ Notice that $\Tsf_\epsilon(\Gamma)\subset\Ncal$ is an $\I$-invariant set. If $\epsilon$ is small enough, the exponential map
\[
F\colon \Gamma \times [-\epsilon,\epsilon] \to \overline{\Tsf_\epsilon(\Gamma)},\enskip 
F(P,t)=\exp_P(t\, {\sf n}(P)),
\]
is a diffeomorphism and $\Tsf_\epsilon(\Gamma)=F(\Gamma \times (-\epsilon,\epsilon))$; where $\sf n$ is an $\I$-invariant normal field along $\Gamma$ in $(\Ncal,\sigma_\Ncal^2).$ In this setting, $\Tsf_\epsilon(\Gamma)$ is said to be a {\em metric tubular neighborhood} of $\Gamma$ (of radius $\epsilon$). Furthermore, if $\pi_\Gamma\colon \Gamma \times (-\epsilon,\epsilon) \to \Gamma$ denotes the projection $\pi_\Gamma(P,t)=P,$ we denote by 
\[
\Pgot_\Gamma\colon \Tsf_\epsilon(\Gamma) \to \Gamma,\enskip \Pgot_\Gamma (Q):=\pi_\Gamma(F^{-1}(Q)),
\]
the natural orthogonal projection. Since $\Gamma$, ${\sf n}$, and $\sigma_\Ncal^2$ are $\I$-invariant, then 
\[
F\circ(\I\times {\rm Id})|_{\Gamma\times[-\epsilon,\epsilon]}=\I\circ F \enskip\text{and}\enskip \Pgot_\Gamma\circ\I|_{\Tsf_\epsilon(\Gamma)}=\I\circ \Pgot_\Gamma.
\]

\begin{definition}\label{def:bordered}  
A domain $U\subset\Ncal$ is said to be {\em bordered} if it is the interior of a compact Riemann surface $\overline{U}\subset\Ncal$ with smooth boundary. In this case $b\overline{U}=\Fr U\subset\Ncal$ consists of finitely many closed Jordan curves.

If $\Ncal$ is of finite topology, we denote by $\Bscr_\I(\Ncal)$ the family of $\I$-invariant bordered domains $U\Subset\Ncal$ such that $\Ncal$ is a topological tubular neighborhood of $U$. (The latter means that $\Ncal\setminus \overline{U}$ has no relatively compact connected components and consists of finitely many open annuli.)
\end{definition}


\subsection{Non-orientable minimal surfaces} \label{sec:minimal}

In this subsection we describe the Weierstrass representation for non-orientable minimal surfaces (see \cite{Meeks-8pi}), and introduce some notation.

An $\I$-invariant conformal minimal immersion $X\colon \Ncal\to\r^3$ induces a conformal minimal immersion $\sub X \colon \Ncal/\I\to\r^3$, satisfying $X=\sub X \circ \pi$. In this sense, $X(\Ncal)$ is an immersed non-orientable minimal surface in $\r^3$.  Conversely, any immersed non-orientable minimal surface in $\r^3$ comes in this way. 


Let $X=(X_j)_{j=1,2,3}\colon \Ncal \to \r^3$ be an $\I$-invariant conformal minimal immersion. Denote by $\phi_j=\partial X_j,$ $j=1,2,3,$ and $\Phi=\partial X\equiv (\phi_j)_{j=1,2,3}.$  The $1$-forms  $\phi_j$ are holomorphic,  have no real periods, and satisfy
\begin{equation}\label{eq:conformal}
\sum_{j=1}^3 \phi_j^2=0
\end{equation}
and
\begin{equation}\label{eq:Wdata-non}
\I^*\Phi=\overline{\Phi}.
\end{equation}
The intrinsic metric in $\Ncal$ is given by
\begin{equation}\label{eq:metric}
ds^2=\sum_{j=1}^3 |\phi_j|^2;
\end{equation}
hence 
\begin{equation}\label{eq:immersion}
\text{$\sum_{j=1}^3 |\phi_j|^2$ vanishes nowhere on $\Ncal$}.
\end{equation}
The triple $\Phi$ is said to be the {\em Weierstrass representation} of $X$. 

Conversely, any vectorial holomorphic $1$-form $\Phi=(\phi_j)_{j=1,2,3}$ on $\Ncal$ without real periods, enjoying \eqref{eq:conformal}, \eqref{eq:Wdata-non}, and \eqref{eq:immersion}, determines an $\I$-invariant conformal minimal immersion $X\colon \Ncal\to\r^3$ by the expression
\[
X=\Re\int\Phi,
\]
where $\Re$ means real part. Cf. \cite{Meeks-8pi}.

The following notation will be required later on.

\begin{definition}\label{def:M(A)}
For any $\I$-invariant subset $A\subset \Ncal,$  we denote by $\Mcal_\I(A)$ the space of $\I$-invariant conformal minimal immersions of $\I$-invariant open domains  $W\subset \Ncal$, containing $A$, into $\r^3.$ 
\end{definition}

Given an $\I$-invariant connected subset $A\subset\Ncal$ and $X\in\Mcal_\I(A)$, we denote by $\dist_X$ the distance in $A$ associated to the intrinsic metric induced by $X$; that is, 
\[
\dist_X(P,Q)=\inf \big\{\ell(X(\gamma))\colon \gamma\text{ arc in $A$ connecting $P$ and $Q$}\big\},
\]
where $\ell$ means Euclidean length in $\r^3$.

\begin{definition} \label{def:conor}
Let $A$ be a subset of $\Ncal,$ let $X$ be a conformal minimal immersion from an open subset containing $A$ into $\r^3$, and let $\gamma(s)$ be an arc-length parameterized curve in $A.$ The {\em conormal vector field}  of $X$ along $\gamma$ is the unique unitary tangent vector field  $\mu$ of $X$ along $\gamma$ such that $\{d X(\gamma'(s)),\mu(s)\}$ is a positive basis for all $s.$
If in addition $\gamma$ is closed, then the {\em flux} $\pgot_X(\gamma)\in\r^3$ of $X$ along $\gamma$ is given by $\int_\gamma \mu(s) ds.$ 
\end{definition}

If $\gamma$ is closed, it is easy to check that
\[
\pgot_X(\gamma)=\Im\int_{\gamma} \partial X
\]
(here $\Im$ means imaginary part), and that the {\em flux map} $\pgot_X\colon \Hcal_1(A,\z)\to \r^3$ is a group morphism. Furthermore, if $A$ and $X$ are $\I$-invariant, then the flux map $\pgot_X\colon \Hcal_1(A,\z)\to\r^3$ of $X$ satisfies 
\begin{equation}\label{eq:flux-non}
\pgot_X(\I_*(\gamma))=-\pgot_X(\gamma)\enskip \forall \gamma\in \Hcal_1(A,\z);
\end{equation}
recall that $\I\colon\Ncal\to\Ncal$ reverses the orientation.


\subsection{$\I$-admissible sets and $\I$-invariant generalized minimal immersions}\label{sec:generalized}

In this subsection we introduce the necessary notation for a well understanding of the Runge-Mergelyan type approximation result for non-orientable minimal surfaces, given by the authors in \cite{AL-RMnon}, which is the key tool in the present paper; see Theorem \ref{th:Mergelyan} below.

\begin{remark}\label{rem:inicio}
From now on in the paper, $\Ncal,$ $\I$, and $\pi$ will be as in Def.\ \ref{def:non}. We also fix a conformal Riemannian metric $\sigma_\Ncal^2$ on $\Ncal$ such that $\I^*(\sigma_\Ncal^2)=\sigma_\Ncal^2.$
\end{remark}

 A compact Jordan arc in $\Ncal$ is said to be analytical (smooth, continuous, etc.) if it is contained in an open analytical (smooth, continuous, etc.) Jordan arc in $\Ncal.$

\begin{definition}\label{def:admi}
A (possibly non-connected) $\I$-invariant compact subset $S\subset \Ncal$ is said to be $\I$-admissible (in $\Ncal$) if it meets the following requirements (see Fig.\ \ref{fig:admi}):
\begin{enumerate}[\rm (a)]
\item $S$ is Runge (in $\Ncal$); i.e., $\Ncal\setminus S$ has no relatively compact connected components in $\Ncal$.
\item $R_S:=\overline{S^\circ}$ is non-empty and consists of a finite collection of pairwise disjoint compact regions in $\Ncal$ with   $\Ccal^0$ boundary.
\item $C_S:=\overline{S\setminus R_S}$ consists of a finite collection of pairwise disjoint analytical Jordan arcs.
\item Any component $\alpha$ of $C_S$  with an endpoint  $P\in R_S$ admits an analytical extension $\beta$ in $\Ncal$ such that the unique component of $\beta\setminus\alpha$ with endpoint $P$ lies in $R_S$.
\end{enumerate}
\end{definition}
\begin{figure}[ht]
    \begin{center}
    \scalebox{0.22}{\includegraphics{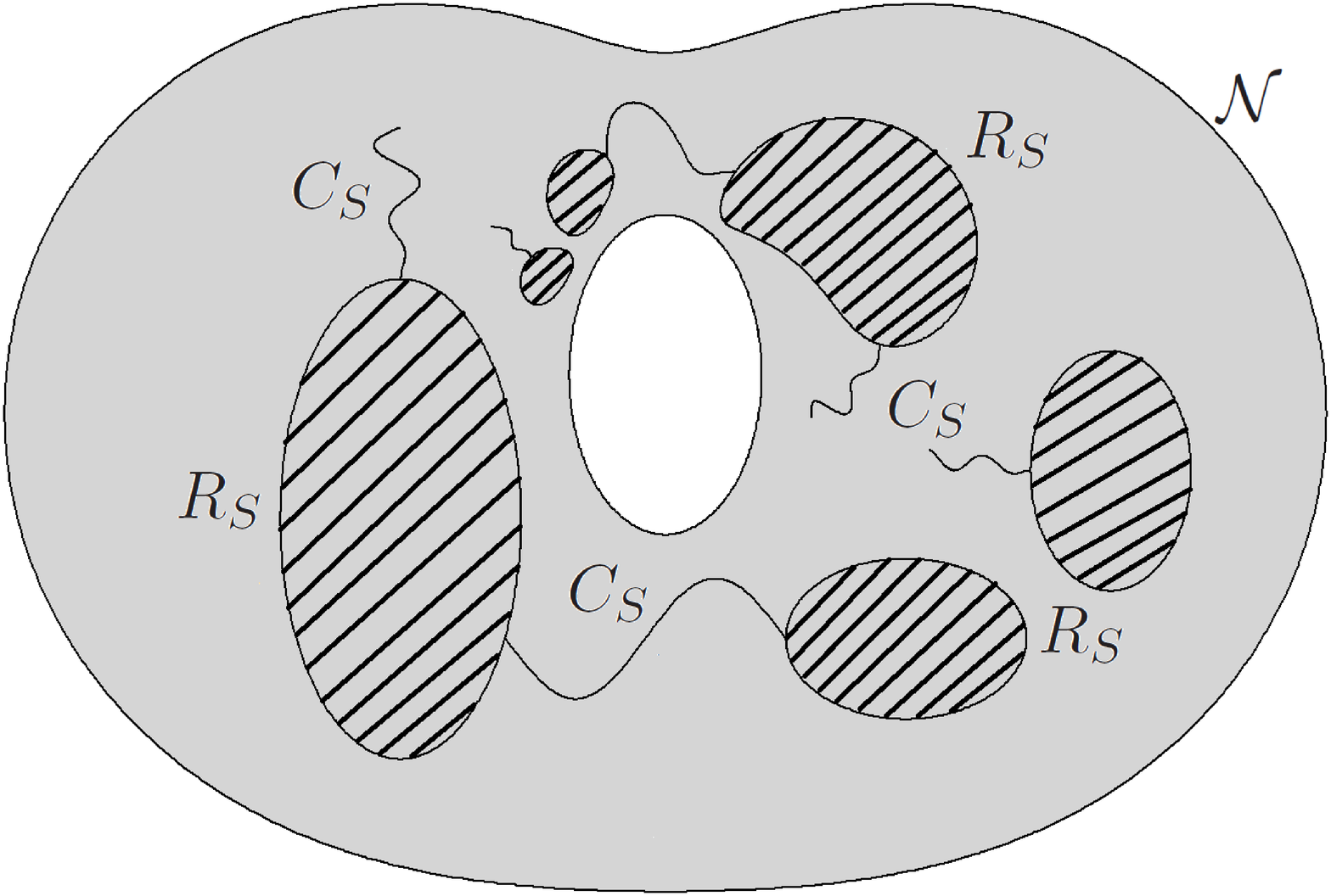}}
        \end{center}
\caption{An $\I$-admissible set $S\subset\Ncal.$}
\label{fig:admi}
\end{figure}

An $\I$-invariant compact subset $S\subset \Ncal$ enjoying {\rm (b)}, {\rm (c)}, and {\rm (d)},  is $\I$-admissible if and only if $i_*\colon \Hcal_1(S,\z) \to \Hcal_1(\Ncal,\z)$ is a monomorphism; where $\Hcal_1(\cdot,\z)$ means first homology group, $i\colon S \to \Ncal$ denotes the inclusion map,  and $i_*$ is the induced group morphism. 

From now on in this section, let $S\subset\Ncal$ be an $\I$-admissible set.

\begin{definition}\label{def:gene-min}
We say that an $\I$-invariant map $X\colon S\to\r^3$ is an {\em $\I$-invariant generalized minimal immersion}, and write $X\in \Mcal_{\ggot,\I}(S)$, if it meets the following requirements:
\begin{itemize}
\item $X|_{R_S} \in \Mcal_\I(R_S)$ (see Def.\ \ref{def:M(A)}); hence it extends as an $\I$-invariant conformal minimal immersion $X_0$ to an open domain $V$ in $\Ncal$ containing $R_S$.

\item For any component $\alpha$ of $C_S$ and any open analytical Jordan arc $\beta$ in $\Ncal$ containing $\alpha,$  $X|_\alpha$ is a regular curve admitting a smooth extension $X_\beta$ to $\beta$ such that $X_\beta|_{V \cap \beta}=X_0|_{V \cap \beta}$.
\end{itemize}

\end{definition}

Notice that $X|_{S}\in \Mcal_{\ggot,\I}(S)$  for all $X \in \Mcal_\I(S).$ 

Let $X\in \Mcal_{\ggot,\I}(S)$, and let $\varpi$ be  a smooth 3-dimensional real 1-form on $C_S$. This means that $\varpi=(\varpi_j)_{j=1,2,3}$, where $\varpi_j$ is a real smooth 1-form on $C_S$, $j=1,2,3$. For any $\alpha\subset C_S$ we write  $\varpi|_\alpha=\varpi(\alpha(s))ds$,   where $s$ is the arc-length parameter of $X\circ \alpha$. By definition, $\varpi$ is said to be a {\em mark} along $C_S$ with respect to $X$ if  for any arc $\alpha\subset C_S$ the following conditions hold:
\begin{itemize}
\item  $\varpi(\alpha(s))\in \r^3$ is a smooth unitary vector field along $\alpha$ orthogonal  to $(X\circ \alpha)'(s)$.
\item $\varpi$ extends smoothly  to any open analytical  arc $\beta$ in $\Ncal$ containing $\alpha$.
\item $\varpi(\beta(s))$ is unitary, orthogonal to $(X\circ \beta)'(s)$, and tangent to $X(R_S)$ at $\beta(s)$ for any $\beta(s)\in \beta \cap R_S$, where as above $s$ is the arc-length parameter of $(X\circ \beta)(s)$.
\end{itemize}

Let ${\mathfrak{n}}\colon R_S\to\s^2$ denote the Gauss map of the (oriented) conformal minimal immersion $X|_{R_S}$.  The mark $\varpi$ is said to be {\em orientable} with respect to $X$ if the orientations at the two endpoints of each arc in $C_S$ agree, that is to say, if there exists $\delta \in \{-1,1\}$ such that for any  regular embedded curve $\alpha \subset S$ and arc-length parametrization $(X\circ \alpha)(s)$ of $X\circ \alpha$, 
\[
\text{$(X\circ\alpha)'({s_0})\times  \varpi(\alpha({s_0}))= \delta {\mathfrak{n}}(\alpha({s_0}))\quad$ for all ${s_0}\in \alpha^{-1}(C_S\cap R_S)$.}
\]

Orientable marks along $C_S$ with respect to $X$ always exist since $\Ncal$ is orientable. An orientable mark $\varpi$ with respect to $X$  is said to be {\em positively oriented} if  $\delta=1$. Obviously, if $\varpi$ is orientable with respect to $X$ then either $\varpi$ or $-\varpi$ is positively oriented. 

In the sequel we will only consider orientable marks.

If $\varpi$ is a positively oriented mark along $C_S$ with respect to $X$, we denote by ${\mathfrak{n}}_\varpi\colon S\to \s^2\subset \r^3$ the map given by ${\mathfrak{n}}_\varpi|_{R_S}={\mathfrak{n}}$ and $({\mathfrak{n}}_\varpi \circ \alpha)(s):=(X\circ \alpha)'(s)\times \varpi(\alpha(s))$, where $\alpha$ is any component of
$C_S$ and $s$ is any  arc-length parameter of $X\circ\alpha$.  By definition,  ${\mathfrak{n}}_\varpi$ is said to be the {\em (generalized) Gauss map} of $X$ associated to the orientable mark $\varpi$.

\begin{definition}\label{def:marked}
We denote by  $\Mcal_{\ggot,\I}^*(S)$ the space of marked immersions $X_\varpi:=(X,\varpi),$ where $X \in \Mcal_{\ggot,\I}(S)$ and $\varpi$ is a positively oriented mark along $C_S$ with respect to $X$ satisfying the following properties:
\begin{itemize}
\item $\I^*(\varpi)=-\varpi$, or equivalently,
\begin{equation}\label{eq:con-non}
{\mathfrak{n}}_\varpi \circ \I=-{\mathfrak{n}}_\varpi.
\end{equation}
\item If ${\rm st}\colon\s^2\to\overline\c$ is the stereographic projection, the function ${\rm st}\circ {\mathfrak{n}}_\varpi\colon S\to\overline\c$, which is holomorphic on an open neighborhood of $R_S$, is smooth in an analogous way to Def.\ \ref{def:gene-min}.
\end{itemize}
\end{definition}

A $1$-form $\theta$ on $S$ is said to be of {\em type $(1,0)$} if for any conformal chart $(U,z)$ in $\Ncal,$ $\theta|_{U \cap S}=h(z) dz$ holds for some function $h\colon U \cap S \to \overline{\c}.$
Finite sequences $\Theta=(\theta_1,\ldots,\theta_n),$ where $\theta_j$ is a $(1,0)$-type $1$-form for all $j\in\{1,\ldots,n\},$ are said to be $n$-dimensional vectorial $(1,0)$-forms on $S.$   The space of continuous $n$-dimensional $(1,0)$-forms on $S$ will be endowed with the $\Ccal^0$ topology induced by the norm 
\begin{equation} \label{eq:norma0}
\|\Theta\|_{0,S}:=\big\|\frac{\Theta}{\sigma_\Ncal}\big\|_{0,S}=\max_{S} \big\{ \big(\sum_{j=1}^n |\frac{\theta_j}{\sigma_\Ncal}|^2\big)^{1/2}\big\} .
\end{equation}

\begin{definition}\label{def:wei-gen}
For every $X_\varpi \in \Mcal_{\ggot,\I}^*(S),$ we denote by $\partial X_\varpi$ the complex vectorial $1$-form on $S$ given by  
\[
\partial X_\varpi|_{R_S}=\partial (X|_{R_S}),\enskip \partial X_\varpi(\alpha'(s))= dX
(\alpha'(s)) + \imath \varpi(s);
\]
where $\imath=\sqrt{-1}$,
\begin{itemize}
\item $dX$ denotes the vectorial $1$-form of type $(1,0)$ on $C_S$  given by
\[
dX|_{\alpha \cap U}=(X \circ \alpha)'(x)dz|_{\alpha \cap U},
\]
for any component $\alpha$ of $C_S,$ where $(U,z=x+\imath y)$ is any conformal chart on $\Ncal$ satisfying that $z(\alpha \cap U)\subset \r$ (the existence of such a conformal chart is guaranteed by the analyticity of $\alpha$), and

\item $s$ is the arc-length parameter of $X|_\alpha$  for
which $\{dX (\alpha'(s_i)), \varpi(s_i)\}$ are positive, where $s_1$ and $s_2$ are the values of $s$ for which  $\alpha(s) \in b R_S.$
\end{itemize}
\end{definition}

In the setting of Def.\ \ref{def:wei-gen}, writing $\partial X_\varpi=(\widehat{\phi}_j)_{j=1,2,3}$, it follows that
\begin{itemize}
\item $\sum_{j=1}^3 \widehat \phi_j^2$ vanishes everywhere on $S$, 
\item $\sum_{j=1}^3 |\widehat \phi_j|^2$ vanishes nowhere on $S$, and
\item $\I^* (\partial X_\varpi)=\overline{\partial X_\varpi}$.
\end{itemize}
For these reasons the triple $\partial X_\varpi$ is said the {\em generalized Weierstrass representation} of $X_\varpi$.

For $F \in \Mcal_\I(S)$, we denote by $\varpi_F$ the conormal field of $F$ along  $C_S.$ Notice that $\varpi_F$ satisfies \eqref{eq:con-non} and $(\partial F)|_S=\partial F_{\varpi_F}$; where $F_{\varpi_F}:=(F|_S,\varpi_F) \in \Mcal_{\ggot,\I}^*(S).$ 

The space $\Mcal_{\ggot,\I}^*(S)$ is naturally endowed with the following $\Ccal^1$ topology:

\begin{definition}\label{def:C1}
Let $W$ be an $\I$-invariant open domain in $\Ncal$ containing $S$.
\begin{itemize}
\item Given $X_{\varpi},$ $Y_{\xi}\in\Mcal_{\ggot,\I}^*(S),$ we set
\[
\|X_{\varpi}-Y_{\xi}\|_{1,S}:=\|X-Y\|_{0,S}+\big\|\partial X_{\varpi}-\partial Y_{\xi}\big\|_{0,S} \enskip\text{(see \eqref{eq:norma00} and \eqref{eq:norma0})}.
\] 

\item Given $F,$ $G\in \Mcal_\I(S),$ we set 
\[
\|F-X_{\varpi}\|_{1,S}:=\|F_{\varpi_F}-X_{\varpi}\|_{1,S}\enskip \text{and} \enskip \|F-G\|_{1,S}:=\|F_{\varpi_F}-G_{\varpi_G}\|_{1,S}.
\]

\item We will say that $X_\varpi \in \Mcal_{\ggot,\I}^*(S)$ {\em can be  approximated in the $\Ccal^1$ topology on $S$ by $\I$-invariant conformal minimal immersions in $\Mcal_\I(W)$} if for any $\epsilon>0$ there exists  $Y\in  \Mcal_\I(W)$ such that $\|Y-X_\varpi\|_{1,S}<\epsilon.$
\end{itemize}
\end{definition} 

If $X_\varpi\in\Mcal_{\ggot,\I}^*(S),$ then the group homomorphism  
\begin{equation}\label{eq:fluxgene}
\pgot_{X_\varpi}\colon \Hcal_1(S,\z) \to \r^3, \enskip \pgot_{X_\varpi}(\gamma)=\Im \int_\gamma \partial X_\varpi,
\end{equation}
is said to be the {\em generalized flux map} of $X_\varpi.$ Notice that $\pgot_{X_\varpi}$  satisfies \eqref{eq:flux-non}. Obviously,  $\pgot_{X_{\varpi_Y}}=\pgot_Y|_{\Hcal_1(S,\z)}$ provided that  $X=Y|_S$ for some $Y\in\Mcal_\I(S).$

The following Runge-Mergelyan type approximation result for non-orientable minimal surfaces plays a fundamental role in this paper.

\begin{theorem}[\cite{AL-RMnon}]\label{th:Mergelyan}
Let $S\subset \Ncal$ be an $\I$-admissible subset (see Def.\ \ref{def:admi}), let $X_\varpi \in \Mcal_{\ggot,\I}^*(S)$ (see Def.\ \ref{def:marked}), and let $\pgot\colon \Hcal_1(\Ncal,\z)\to\r^3$ be a group homomorphism such that 
\begin{itemize}
\item $\pgot(\I_*(\gamma))=-\pgot(\gamma)$ for all $\gamma\in \Hcal_1(\Ncal,\z)$, and  
\item $\pgot|_{\Hcal_1(S,\z)}$ equals the generalized flux map $\pgot_{X_\varpi}$ of $X_\varpi$.
\end{itemize}
 Write $X_\varpi=(X=(X_j)_{j=1,2,3},\varpi)$, $\partial X_\varpi=(\phi_j)_{j=1,2,3}$, and $\pgot=(\pgot_j)_{j=1,2,3}$.

Then the following assertions hold:

\begin{enumerate}[\rm (I)]
\item $X_\varpi$ can be approximated in the $\Ccal^1$ topology on $S$ (see Def.\ \ref{def:C1}) by $\I$-invariant conformal minimal immersions $Y\in\Mcal_{\I}(\Ncal)$ with flux map $\pgot_Y=\pgot$.

\item If $\phi_3$ does not vanish everywhere on $S$ and extends to $\Ncal$ as a holomorphic $1$-form without real periods, vanishing nowhere on $C_S$, and satisfying $\pgot_3(\gamma)=\Im\int_\gamma \phi_3$ for all $\gamma\in\Hcal_1(\Ncal,\z)$, then $X_\varpi$ can be approximated in the $\Ccal^1$ topology on $S$ by $\I$-invariant conformal minimal immersions $Y=(Y_j)_{j=1,2,3}\in\Mcal_{\I}(\Ncal)$ with flux map $\pgot_Y=\pgot$ and third coordinate $Y_3=X_3$.
\end{enumerate}
\end{theorem}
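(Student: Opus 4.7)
The plan is to reduce the problem to an approximation statement for meromorphic Weierstrass data on the orientable cover $\Ncal$, then apply a symmetrized version of the classical Runge--Mergelyan theorem, and finally correct periods to match the prescribed flux $\pgot$. First I would extend $X_\varpi$ from the $\I$-admissible set $S$ to an $\I$-invariant conformal minimal immersion $\widehat X$ defined on an $\I$-invariant open neighborhood $\Omega$ of $S$ in $\Ncal$. The admissibility conditions (b)--(d) in Definition \ref{def:admi} provide analytical extensions of the arcs in $C_S$ along which $X$ and the mark $\varpi$ together pin down germs of the generalized Weierstrass representation $\partial X_\varpi$; condition (a), equivalent to the injectivity of $i_*\colon \Hcal_1(S,\z)\to\Hcal_1(\Ncal,\z)$, together with the compatibility hypothesis on $\pgot$, ensures that such an extension can be chosen with no real periods in $\Omega$ and with $\I$-symmetric flux agreeing with $\pgot|_{\Hcal_1(S,\z)}$.

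\textbf{Symmetrized Mergelyan.} Writing $\partial\widehat X$ in (spinorial) Weierstrass form as a pair $(g,\eta)$, where $g$ is the stereographic projection of the Gauss map (a meromorphic function on $\Omega$) and $\eta$ is a holomorphic $1$-form, the conformality $\sum\phi_j^2=0$ becomes tautological, the immersion condition becomes a non-vanishing condition on $(g,\eta)$, and the symmetry $\I^*\partial\widehat X=\overline{\partial\widehat X}$ translates into the antipodal relation $g\circ\I=-1/\overline{g}$ together with a matching transformation law $\I^*\eta=-\overline{g^2\eta}$. I would then apply the classical Runge--Mergelyan theorem on the orientable Riemann surface $\Ncal$ (legitimate since $S$ is Runge) to approximate $g$ and $\eta$ on $S$ by data meromorphic and holomorphic, respectively, on all of $\Ncal$. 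Since the starting data is already $\I$-equivariant on the $\I$-invariant set $S$, the approximations can be symmetrized under $\I$ without destroying the $\Ccal^1$-closeness on $S$ (a standard splitting of a function into symmetric and antisymmetric parts under $\I$).

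\textbf{Period fixing and part (II).} The resulting triple $\widetilde\Phi$ on $\Ncal$ will in general have residual periods along loops in $\Ncal\setminus S$ that do not match $\pgot$. I would correct these by the standard finite-dimensional period-fixing device: perturb $(g,\eta)$ by an $\I$-equivariant family of Runge corrections supported away from $S$, chosen so that the differential of the associated period map surjects onto the $(-1)$-eigenspace of $\I_*$ acting on $\mathrm{Hom}(\Hcal_1(\Ncal,\z),\r^3)$ (recall \eqref{eq:flux-non}); the implicit function theorem then produces an exact correction without spoiling approximation on $S$. Integrating yields the desired $Y\in\Mcal_\I(\Ncal)$ with $\pgot_Y=\pgot$. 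For part (II), since $\phi_3$ is already globally defined on $\Ncal$ with the correct third-component flux, I would hold $\phi_3$ fixed throughout and approximate only the "horizontal" datum $g$, recovering $\eta=\phi_3/g$; the argument then runs inside the first two coordinates and delivers $Y$ with $Y_3=X_3$. The main obstacle is precisely this equivariant period fixing: one must build deformations whose first-order period variation surjects onto the correct eigenspace while respecting the involution, which requires carefully pairing loops exchanged by $\I_*$ and handling the sign reversal in \eqref{eq:flux-non}, and in part (II) doing so while keeping the third coordinate of every perturbation identically zero.
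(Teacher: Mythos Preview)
The paper does not contain a proof of this theorem: it is quoted from \cite{AL-RMnon} as a ready-made tool, so there is no ``paper's own proof'' to compare your proposal against.

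That said, your sketch has a genuine gap at the symmetrization step. You propose to approximate $(g,\eta)$ on $S$ by global data via the classical Runge--Mergelyan theorem on $\Ncal$ and then symmetrize under $\I$ by ``a standard splitting of a function into symmetric and antisymmetric parts.'' But the transformation law you yourself write down, $g\circ\I=-1/\overline{g}$, is \emph{not} linear in $g$; it is a M\"obius (antipodal) relation on $\s^2$. There is no averaging $g\mapsto\tfrac12(g+\text{conjugate transform})$ that lands in the correct symmetry class, and the companion law $\I^*\eta=-\overline{g^2\eta}$ couples $\eta$ to $g$, so the two pieces cannot be symmetrized independently either. This is exactly the difficulty that makes the non-orientable Mergelyan theorem nontrivial and forces the dedicated treatment in \cite{AL-RMnon}; the averaging trick works for relations of the type $f\circ\I=\overline{f}$ (as with the $1$-forms $\phi_j$ themselves), not for the Gauss map. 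A workable route must either approximate directly in the quadric $\{\sum\phi_j^2=0\}$ while enforcing $\I^*\Phi=\overline{\Phi}$ throughout, or pass to spinorial coordinates in which the $\I$-equivariance becomes linear; your outline does not indicate how to do either. The period-fixing part is also only asserted: building $\I$-equivariant perturbations whose period differential surjects onto the $(-1)$-eigenspace, while in part (II) keeping the third component identically zero, is the technical core and needs an actual construction, not an appeal to the implicit function theorem alone.
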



\subsection{Convex domains and Hausdorff distance}\label{sec:convex}

A convex domain $\Dcal\subset\r^n$, $\Dcal \neq \r^n$, $n\geq 2$, is said to be {\em regular} (resp., {\em analytic}) if its frontier $\Fr\Dcal=\overline{\Dcal}\setminus\Dcal$ is a regular (resp.,  analytic) hypersurface of $\r^n$. Moreover, $\Dcal$ is said to be {\em strictly convex} if $\Fr\Dcal$ contains no straight segments.

For any couple of compact subsets $K$ and $O$ in $\r^n$,
the {\em Hausdorff distance} between $K$ and $O$ is given by
\[
\dgot^{\rm H}(K,O):=\max\Big\{ \sup_{x\in K} \inf_{y\in O} \|x-y\|\;,\;
\sup_{y\in K} \inf_{x\in O} \|x-y\|\Big\}.
\]

A sequence $\{K^j\}_{j \in \n}$ of (possibly unbounded) closed subsets of $\r^n$ is said to converge in the {\em Hausdorff topology} to a closed subset $K^0$ of $\r^n$ if
$\{K^j\cap B\}_{j \in \n}\to K^0 \cap B$ in the Hausdorff distance for any closed Euclidean ball $B \subset \r^n.$ If $K^j\Subset K^{j+1} \subset K^0$ for all $j\in\n$ and $\{K^j\}_{j \in \n} \to K^0$ in the Hausdorff topology, then we write $\{K^j\}_{j \in \n} \nearrow K^0.$ 


\begin{theorem}[\cite{Minkowski,MeeksYau}]\label{th:mink}
Let $\Bcal\subset\r^n$ be a (possibly neither bounded nor regular) convex domain. 
Then there exists a sequence $\{\Dcal^j\}_{j\in\n}$ of   bounded strictly convex analytic domains in $\r^n$ with $\{\overline{\Dcal^j}\}_{j \in \n} \nearrow \overline{\Bcal}.$
\end{theorem}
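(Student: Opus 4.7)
The plan is to construct the sequence $\{\Dcal^j\}$ in two independent stages: first exhaust $\Bcal$ by an increasing sequence of bounded open convex subsets, and then replace each member of that sequence by a bounded strictly convex analytic domain squeezed between consecutive members.

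For the exhaustion, fix any $p_0\in\Bcal$ and, for each sufficiently large $j\in\n$, set
\[
\Bcal_j:=\big\{x\in\Bcal\colon \|x-p_0\|<j\ \text{and}\ \dist(x,\Fr\Bcal)>1/j\big\}
\]
(with the convention $\dist(x,\Fr\Bcal)=+\infty$ when $\Bcal=\r^n$). As the intersection of convex sets, each $\Bcal_j$ is open, bounded, and convex; continuity of $\dist(\cdot,\Fr\Bcal)$ on $\Bcal$ together with the strict inequalities in the definition yields $\overline{\Bcal_j}\Subset\Bcal_{j+1}$ and $\bigcup_j\Bcal_j=\Bcal$, hence $\{\overline{\Bcal_j}\}\nearrow\overline\Bcal$ in the Hausdorff topology. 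For each $j$ I then choose $\epsilon_j>0$ so small that the outer parallel body $E^j:=\overline{\Bcal_j}+\overline{B(0,\epsilon_j)}$ still satisfies $E^j\Subset\Bcal_{j+1}$. The desired $\Dcal^j$ will be a bounded strictly convex analytic domain with
\[
\overline{\Bcal_j}\subset\Dcal^j\Subset\Bcal_{j+1};
\]
granted this, the chain $\overline{\Bcal_j}\subset\Dcal^j\Subset\Bcal_{j+1}\subset\Dcal^{j+1}$ forces both $\overline{\Dcal^j}\Subset\Dcal^{j+1}$ and, by squeezing between $\{\overline{\Bcal_j}\}$ and $\{\overline{\Bcal_{j+1}}\}$, the convergence $\{\overline{\Dcal^j}\}\nearrow\overline\Bcal$.

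The main obstacle is the analytic smoothing step, which is the content of Minkowski's classical approximation theorem cited in the statement. I would let $f:=\dist(\cdot,E^j)\colon\r^n\to[0,\infty)$, which is Lipschitz and convex since the distance to a convex set is convex, and convolve it with a Gaussian $G_\sigma$ of width $\sigma>0$: because $f$ is convex and $G_\sigma\geq 0$ has unit integral, $f_\sigma:=f*G_\sigma$ is a real-analytic convex function on $\r^n$ that converges to $f$ uniformly on compact sets as $\sigma\to 0^+$. The perturbation $F_{\sigma,\eta}(x):=f_\sigma(x)+\eta\|x\|^2$ is then strictly convex and real-analytic for any $\eta>0$, and its sublevel set $\Dcal^j:=\{F_{\sigma,\eta}<r\}$ is a bounded strictly convex analytic domain for any regular value $r>0$ (boundedness from the quadratic term; analyticity of $b\Dcal^j$ from non-vanishing of $\nabla F_{\sigma,\eta}$, which follows from strict convexity together with $F_{\sigma,\eta}(0)$ being small). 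A straightforward comparison of the sizes of $F_{\sigma,\eta}$ on $\overline{\Bcal_j}$ (where $f\equiv 0$ so $f_\sigma$ is as small as we please) and on $\r^n\setminus\Bcal_{j+1}$ (where $f\geq\dist(E^j,\r^n\setminus\Bcal_{j+1})>0$) shows that $\sigma,\eta,r>0$ can be chosen small enough to secure $\overline{\Bcal_j}\subset\Dcal^j\Subset\Bcal_{j+1}$, completing the construction. The delicate point is arranging analyticity, strict convexity, and the two geometric inclusions simultaneously, which is exactly what the convexity-preserving properties of convolution with non-negative kernels, combined with the strictly convex quadratic perturbation, make possible.
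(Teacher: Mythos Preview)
Your argument is correct. The paper does not actually prove this statement: it is recorded as a classical fact with references to Minkowski and Meeks--Yau and is used as a black box in the proof of Theorem~\ref{th:proper}. Your two-stage construction---an exhaustion by inner parallel bodies intersected with large balls, followed by analytic smoothing via Gaussian convolution plus a strictly convex quadratic perturbation---is a standard and complete way to recover the result. The only places worth tightening are routine: the convexity of the inner parallel set $\{x\in\Bcal:\dist(x,\Fr\Bcal)>1/j\}$ (which follows since $B(x,1/j)\subset\Bcal$ and $B(y,1/j)\subset\Bcal$ force $B(tx+(1-t)y,1/j)\subset\Bcal$ by convexity of $\Bcal$), and the passage from $\bigcup_j\Bcal_j=\Bcal$ to Hausdorff convergence $\{\overline{\Bcal_j}\}\nearrow\overline\Bcal$ on balls (which uses that segments from $p_0$ into $\Bcal$ approach every boundary point, together with compactness). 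Neither affects the validity of the proof.
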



\section{Compact complete non-orientable minimal immersions}\label{sec:theorem}

In this section we prove Theorem \ref{th:intro-compact}, which is a particular instance of Theorem \ref{th:compact} below.

Recall that we have fixed $\Ncal,$ $\I$, $\pi$, and $\sigma_\Ncal^2$ as in Def.\ \ref{def:non}; see Remark \ref{rem:inicio}.
Throughout this section $\Ncal$ is assumed to be of finite topology. See Def.\ \ref{def:bordered} and Def.\ {def:M(A)} for notation.

\begin{theorem}\label{th:compact}
Let $U\in \Bscr_\I(\Ncal)$, let $K\subset U$ be a compact set, and let $X\in\Mcal_\I(\overline{U})$.

Then, for any $\epsilon>0$ there exists an $\I$-invariant domain $M$ in $\Ncal$ and a continuous $\I$-invariant map $Y\colon \overline{M}\to\r^3$, enjoying the following properties:
\begin{itemize}
\item $K\subset M\Subset U$ and the inclusion map $\Hcal_1(M,\z)\hookrightarrow \Hcal_1(\Ncal,\z)$ is an isomorphism.
\item $Y|_M\colon M\to\r^3$ is a complete conformal $\I$-invariant minimal immersion.
\item $\|Y-X\|_{1,K}<\epsilon$.
\item $\|Y-X\|_{0,\overline{M}}<\epsilon$ and the Hausdorff distance $\dgot^{\rm H}\big(X(b\overline{U}), Y(\Fr M)\big)<\epsilon$.
\item If $\Fr M=\Gamma\cup \I(\Gamma)$ with $\Gamma\cap\I(\Gamma)=\emptyset$, then $Y|_{\Gamma}\colon \Gamma\to\r^3$ is injective.
\item The Hausdorff dimension of $Y(\Fr M)$ equals $1$.
\item The flux map $\pgot_Y$ of $Y$ equals the one $\pgot_X$ of $X$.
\end{itemize}
\end{theorem}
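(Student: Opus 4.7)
The strategy is a recursive construction in the spirit of \cite{AL-Israel}, producing a sequence of $\I$-invariant conformal minimal immersions on an exhaustion of $M$ with the prescribed flux and converging to the desired $Y$. I would fix a nested exhaustion $K\subset\overline{U_0}\Subset U_1\Subset\overline{U_1}\Subset\cdots$ by domains $U_n\in\Bscr_\I(\Ncal)$ whose union $M$ lies compactly in $U$, has $\Fr M$ Hausdorff-close to $b\overline U$, and is itself of the same topological type as each $U_n$, so that $\Hcal_1(M,\z)\to\Hcal_1(\Ncal,\z)$ is an isomorphism. I would also fix a summable sequence $\epsilon_n>0$ with $\sum_n\epsilon_n<\epsilon/3$ and start from $X_0:=X|_{\overline{U_0}}$.

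Inductively, given $X_n\in\Mcal_\I(\overline{U_n})$ with $\pgot_{X_n}=\pgot_X$, the aim is to produce $X_{n+1}\in\Mcal_\I(\overline{U_{n+1}})$ with flux $\pgot_X$ satisfying the $\Ccal^1$ approximation $\|X_{n+1}-X_n\|_{1,\overline{U_n}}<\epsilon_n$ and the $\Ccal^0$ bound $\|X_{n+1}-X_n\|_{0,\overline{U_{n+1}}}<\epsilon_n$ (for Cauchyness), the completeness bound $\dist_{X_{n+1}}(K,\Fr U_{n+1})>n+1$, injectivity of $X_{n+1}|_{\Gamma_{n+1}}$ on a chosen $\I$-fundamental half $\Gamma_{n+1}$ of $\Fr U_{n+1}$ with a quantitative separation from $X_{n+1}(\I(\Gamma_{n+1}))$, and a uniform length bound $\ell(X_{n+1}(\Fr U_{n+1}))\le L$ for some $L=L(X,\epsilon)$. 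The key tool is Theorem \ref{th:Mergelyan}(I). I would form an $\I$-admissible set $S_n=\overline{U_n}\cup\Lambda_n$, where $\Lambda_n\subset\overline{U_{n+1}}\setminus U_n$ is a symmetric \emph{labyrinth} of analytic arcs attached to $b\overline{U_n}$, and extend $X_n$ across $\Lambda_n$ to a marked generalized immersion $\widetilde X_n\in\Mcal_{\ggot,\I}^*(S_n)$ by prescribing, along each arc of $\Lambda_n$, a path in $\r^3$ whose length is very large (to force intrinsic diameter growth) yet whose image stays within $\epsilon_n$ of the nearest point of $X_n(b\overline{U_n})$ in the $\Ccal^0$ sense (to preserve boundary geometry). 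The arcs of $\Lambda_n$, the prescribed path data, and the orientable mark are all chosen $\I$-symmetrically, with the mark satisfying $\I^*\varpi=-\varpi$, so that $\widetilde X_n$ is a legitimate element of $\Mcal_{\ggot,\I}^*(S_n)$; the prescribed flux $\pgot_X$ extends $\pgot_{\widetilde X_n}$ and satisfies the anticompatibility \eqref{eq:flux-non} required by Theorem \ref{th:Mergelyan}. Applying Theorem \ref{th:Mergelyan}(I) then yields $Y_n\in\Mcal_\I(\Ncal)$ arbitrarily $\Ccal^1$-close to $\widetilde X_n$ on $S_n$ with $\pgot_{Y_n}=\pgot_X$; setting $X_{n+1}:=Y_n|_{\overline{U_{n+1}}}$ gives all five required properties for a sufficiently accurate approximation.

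The Cauchy estimates make $\{X_n\}$ convergent in $\Ccal^1$ on compact subsets of $M$ and in $\Ccal^0$ on $\overline M$, so the limit $Y\colon\overline M\to\r^3$ is continuous and $\I$-invariant, with $Y|_M\in\Mcal_\I(M)$, $\pgot_Y=\pgot_X$, $\|Y-X\|_{1,K}<\epsilon$, and $\|Y-X\|_{0,\overline M}<\epsilon$. Completeness of $Y|_M$ follows in the standard way: any divergent path in $M$ eventually contains subarcs crossing each annulus $U_{n+1}\setminus\overline{U_n}$, and each such subarc has $Y$-length at least $(n+1)-\sum_{k\ge n+1}\epsilon_k$, which diverges. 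The bound $\dgot^{\rm H}(X(b\overline U),Y(\Fr M))<\epsilon$ follows by combining the $\Ccal^0$ proximity of $\widetilde X_n$ on $\Lambda_n$ to $X_n(b\overline{U_n})$ with the telescoped approximations and the initial proximity of $\Fr M$ to $b\overline U$. Injectivity of $Y$ on $\Gamma:=\lim\Gamma_n$ follows from the summable quantitative separations. Finally, the uniform length bounds $\ell(X_n(\Fr U_n))\le L$ persist in the Hausdorff limit, so the one-dimensional Hausdorff measure of $Y(\Fr M)$ is at most $L$, giving $\dim_{\mathrm H} Y(\Fr M)\le 1$; the injectivity of $Y|_\Gamma$ produces the reverse inequality.

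The essential obstacle beyond the orientable analogues \cite{AL-Israel,Alarcon-Compact} is to carry out the labyrinth construction and the extension across $\Lambda_n$ $\I$-symmetrically: $\Lambda_n$ must split as $\Lambda_n'\sqcup\I(\Lambda_n')$, the prescribed values must satisfy $\widetilde X_n\circ\I=\widetilde X_n$, and the orientable mark must transform as $\I^*\varpi=-\varpi$, all without sacrificing the sharp quantitative length and proximity estimates that drive completeness and Hausdorff control. Once the $\I$-symmetric version of the labyrinth is in place, the quantitative estimates from the orientable compact-complete construction transfer essentially verbatim.
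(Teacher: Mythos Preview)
There is a genuine gap in your inductive step. Theorem \ref{th:Mergelyan}(I) controls the approximant $Y_n$ only on the $\I$-admissible set $S_n=\overline{U_n}\cup\Lambda_n$; it gives \emph{no information whatsoever} on $\overline{U_{n+1}}\setminus S_n$, and in particular none on $b\overline{U_{n+1}}$. Consequently, of your five required properties for $X_{n+1}:=Y_n|_{\overline{U_{n+1}}}$, only the $\Ccal^1$ bound on $\overline{U_n}$ follows. The $\Ccal^0$ bound on $\overline{U_{n+1}}$, the completeness estimate $\dist_{X_{n+1}}(K,\Fr U_{n+1})>n+1$, the injectivity on $\Gamma_{n+1}$, and the length bound $\ell(X_{n+1}(\Fr U_{n+1}))\le L$ all concern the behaviour of $Y_n$ away from $S_n$ and therefore do not follow from ``sufficiently accurate approximation''. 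Fixing the exhaustion $\{U_n\}$ in advance is the source of the trouble: the next domain must be chosen \emph{after} the deformation, as a suitably thin neighbourhood on which continuity propagates the desired estimates.

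The paper implements this via a two-layer recursion. In the inner step (Lemma \ref{lem:compact}) one deforms in two stages: first along arcs $r_{i,j}$ using Theorem \ref{th:Mergelyan}(I), then over the complementary discs $\Omega_{\eta(k)}$ using Theorem \ref{th:Mergelyan}(II), which preserves one coordinate $\langle\cdot,e_{\eta(k)}^3\rangle$. The domain $V$ is selected afterwards so that $\rho<\dist_H(\cdot,\overline M)<2\rho$ on $b\overline V$, and the preserved coordinate yields the Pythagorean control $\|Y-\Fgot\circ\Pgot\|_{0,b\overline V}<\sqrt{4\rho^2+\mu^2}+\epsilon$. Iterating this with increments $\rho_n-\rho_{n-1}=\asf/n$ gives divergent intrinsic radius while $\mu_n$ stays bounded (Lemma \ref{lem:compact2}); the simultaneous $\Ccal^0$ smallness and intrinsic blow-up that you assert in one Mergelyan step actually require this mechanism. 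Your proposal uses only part (I) and misses the coordinate-preserving step entirely. Finally, the uniform length bound $\ell(X_n(\Fr U_n))\le L$ is neither established nor, in general, expected to hold; the paper controls the Hausdorff dimension of $Y(\Fr M)$ instead by a covering argument (property {\rm (6$_n$)}: $X_n(\overline\Tsf_n)$ is $(1/\tau_n)^n$-close to $\sim(\tau_n)^{n+1}$ points), not by bounding lengths.
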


\begin{remark}\label{rem:huuhuesr}
Theorem \ref{th:compact} does not insure that $M$ is a bordered domain in $\Ncal$. In particular, we can not guarantee that $\Fr M$ consists of a finite collection of pairwise disjoint Jordan curves; the same happens to $Y(\Fr M)$.
\end{remark}



As usual in this kind of constructions, the map $Y$ in Theorem \ref{th:compact} will be constructed in a recursive procedure; the key tool in this process is Lemma \ref{lem:compact2} below. Most of the technical arguments in the proof of Lemma \ref{lem:compact2} are contained in the following

\begin{lemma}\label{lem:compact}
Let $U\in\Bscr_\I(\Ncal)$, let $K\subset U$ be a compact set, let $\Tsf\Subset \Ncal\setminus K$ be a metric tubular neighborhood of $b\overline{U}$ in $\Ncal$, and let $\Pgot\colon \Tsf\to b\overline{U}$ be the orthogonal projection. Let $X\in \Mcal_\I(\overline{U})$, let $\Fgot\colon b\overline{U}\to\r^3$ be an $\I$-invariant analytical map, and let $\mu>0$ such that
\begin{equation}\label{eq:lemma0}
\|X-\Fgot\|_{0,b\overline{U}}<\mu.
\end{equation}

Then, for any $\rho>0$ and any $\epsilon>0$ there exist $V\in\Bscr_\I(\Ncal)$ and $Y\in\Mcal_\I(\overline{V})$ enjoying the following properties:
\begin{enumerate}[\rm (i)]
\item $K\subset V\Subset U$ and $b\overline{V}\subset \Tsf$.
\item If $b\overline V=\Gamma\cup\I(\Gamma)$ with $\Gamma\cap\I(\Gamma)=\emptyset$, then $Y|_{\Gamma}\colon \Gamma\to \r^3$ is an embedding.
\item $\dist_Y(K,b\overline{V})>\rho.$
\item $\|Y-X\|_{1,K}<\epsilon$.
\item $\|Y-\Fgot\circ\Pgot\|_{0,b\overline{V}}<\sqrt{4\rho^2+\mu^2}+\epsilon$.
\item The flux map $\pgot_Y$ of $Y$ equals the one $\pgot_X$ of $X$.
\end{enumerate}
\end{lemma}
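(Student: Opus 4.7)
I would follow the Nadirashvili push-out strategy from \cite{MN-Jordan,Alarcon-Compact,AL-Israel}, adapted $\I$-equivariantly via Theorem \ref{th:Mergelyan}. The intrinsic-length increase in (iii) is produced by approximating, on an $\I$-admissible set $S$ attached to $b\overline U$, a generalized minimal immersion whose image along the added arcs sweeps out a long curve in a direction $v$ orthogonal to the error vector $\Fgot-X$; the orthogonality $v\perp(\Fgot-X)$ supplies the Pythagorean bound $\sqrt{4\rho^2+\mu^2}$ in (v).

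Concretely, fix a large integer $N$ and subdivide $b\overline U$ $\I$-invariantly into $2N$ disjoint short analytic arcs $\{\alpha_j,\I(\alpha_j)\}_{j=1}^N$ on each of which $X$, $\Fgot$ and $\varpi_X$ oscillate by less than $\epsilon/N$. For each $j$ pick a midpoint $q_j\in\alpha_j$, set $E_j:=\Fgot(q_j)-X(q_j)$, which has $\|E_j\|<\mu$ by \eqref{eq:lemma0} and is fixed by $\I$ (since $X$ and $\Fgot$ are $\I$-invariant), and choose a unit vector $v_j\in\r^3$ with $v_j\perp E_j$. Take $U'\in\Bscr_\I(\Ncal)$ with $K\Subset U'\Subset U$ and $b\overline{U'}\subset\Tsf$. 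Inside $U\setminus\overline{U'}$ design an $\I$-invariant Jorge-Xavier type labyrinth $\Lcal\cup\I(\Lcal)$ (a finite collection of pairwise disjoint analytic Jordan arcs whose $\I$-images are also disjoint from them), attached to the points $q_j':=\Pgot^{-1}(q_j)\cap b\overline{U'}$ and threading radially through each $q_j$, dense enough that every path from $b\overline{U'}$ to $\Tsf$ must cross many of its branches. Then
\[
S:=\overline{U'}\cup(\Lcal\cup\I(\Lcal))
\]
is $\I$-admissible (Def.\ \ref{def:admi}). Extend $X|_{\overline{U'}}$ to $X^*\in\Mcal_{\ggot,\I}(S)$ by parametrizing each branch of the labyrinth as a regular analytic curve in $\r^3$ of total length $\approx 2\rho$ running in the direction $v_j$ associated to its nearest midpoint, and by $\I$-invariance on $\I(\Lcal)$; fit a positively oriented mark $\varpi$ along $C_S$ agreeing with $\varpi_X$ near $b\overline{U'}$ and satisfying \eqref{eq:con-non}. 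Since $S$ deformation-retracts onto $\overline{U'}$, the generalized flux of $X^*_\varpi$ coincides with $\pgot_X|_{\Hcal_1(S,\z)}$, so the hypothesis of Theorem \ref{th:Mergelyan}(I) is met.

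Applying Theorem \ref{th:Mergelyan}(I) with prescribed flux $\pgot_X$ yields $\widetilde Y\in\Mcal_\I(\Ncal)$ with $\pgot_{\widetilde Y}=\pgot_X$ and $\|\widetilde Y-X^*_\varpi\|_{1,S}<\delta$, for arbitrarily small $\delta\ll\epsilon$. I then take $V\in\Bscr_\I(\Ncal)$ to be a thin $\I$-invariant bordered neighborhood of $S$ with $b\overline V\subset\Tsf$, and set $Y:=\widetilde Y|_{\overline V}$. Properties (i), (iv), (vi) are immediate; (iii) follows because any path in $\overline V$ from $K$ to $b\overline V$ must traverse enough labyrinth branches---each of $\widetilde Y$-length comparable to $2\rho$ divided by the number of branches, by $\Ccal^1$-closeness to $X^*$---to accumulate total intrinsic length $>\rho$. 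For (v), at $p\in b\overline V$ near the branch associated to index $j$,
\[
\|\widetilde Y(p)-\Fgot(\Pgot(p))\|\leq \|X(q_j)+2\rho v_j-\Fgot(q_j)\|+O(\delta+\epsilon/N)=\|{-E_j}+2\rho v_j\|+O(\delta+\epsilon/N)\leq\sqrt{4\rho^2+\mu^2}+\epsilon,
\]
using $v_j\perp E_j$; finally (ii) is obtained by a generic $\I$-equivariant perturbation of the tips of the labyrinth, since self-intersections of a $1$-manifold in $\r^3$ form a generic codimension-$2$ set. \emph{The hard part} is the $\I$-equivariant bookkeeping in the construction of $S$ and $\varpi$: keeping $\Lcal\cap\I(\Lcal)=\emptyset$, choosing the directions $v_j$ consistently with the involution, and---most delicately---specifying the mark $\varpi$ positively oriented, smoothly extendable across the analytic continuations required by Def.\ \ref{def:admi}(d), and antisymmetric under $\I^*$ in the sense of \eqref{eq:con-non}. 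Once this is set up, Theorem \ref{th:Mergelyan}(I) is invoked exactly once and the remaining estimates are standard.
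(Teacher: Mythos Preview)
Your argument for (iii) has a genuine gap. With $V$ taken as a thin neighborhood of $S=\overline{U'}\cup\Lcal\cup\I(\Lcal)$ and $\Lcal$ consisting of finitely many radial spokes attached at points $q_j'\in b\overline{U'}$, the boundary $b\overline V$ contains points lying \emph{between} adjacent spokes, close to $b\overline{U'}$. Any such point is reached from $K$ by a path inside $\overline{U'}$ (where $\widetilde Y\approx X$ in $\Ccal^1$) of length roughly $\dist_X(K,b\overline{U'})$, which is uncontrolled; the assertion that ``any path from $K$ to $b\overline V$ must traverse enough labyrinth branches'' is simply false at these points. Your description also conflates radial spokes (``threading radially through each $q_j$'') with a Jorge--Xavier maze of concentric arcs; the latter would force \emph{crossing}, not \emph{traversing}, and hence would not pick up the $\widetilde Y$-length of the branches either.

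The paper closes this gap with a second stage that you omit. After the first Mergelyan approximation along radial arcs $r_{i,j}$ (extended as highly oscillating curves of \emph{small diameter}, not straight segments of length $2\rho$), a slightly larger domain $W$ with $M\Subset W$ is fixed and $\overline W\setminus M$ is split into discs $\Omega_{i,j}$ bounded by consecutive $r$-arcs, a piece of $b\overline M$, and an arc $\gamma_{i,j}\subset b\overline W$ containing a sub-arc $\Gamma_{i,j}$. Then Theorem~\ref{th:Mergelyan}(II) is applied once per disc, each time freezing the component $\langle\,\cdot\,,e_{i,j}^3\rangle$ (with $e_{i,j}^3$ chosen \emph{parallel} to the error $X-\Fgot\circ\Pgot$, the opposite of your choice) while pushing $\Gamma_{i,j}$ so that $\|H-X\|>2\rho+1$ there. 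Only then does every path from $\overline M$ to $b\overline W$ have $H$-length $>2\rho$ (Claim~\ref{cla:dist1}): those ending on some $\Gamma_{i,j}$ are long by the push-out, those ending near an $r_{i,j}$ are long by the oscillation. Finally $V$ is chosen so that $b\overline V\subset\{\rho<\dist_H(\,\cdot\,,\overline M)<2\rho\}$, which yields (iii) directly; (v) is Claim~\ref{cla:dist2}, using that the $e_{i,j}^3$-component of $H-\Fgot\circ\Pgot$ was frozen ($<\mu$) while the orthogonal component is bounded by the intrinsic distance ($<2\rho$). A single application of Theorem~\ref{th:Mergelyan}(I) cannot produce both the oscillation on the $r$-arcs and the push-out on the $\Gamma$-arcs simultaneously.
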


\begin{proof}
Let $\epsilon_0>0$.

Since $U\in\Bscr_\I(\Ncal)$, then
\[
b\overline{U}=\cup_{i=1}^\igot (\beta_i\cup\I(\beta_i)),
\]
where $\{\beta_i\}_{i=1}^\igot$ are pairwise disjoint smooth Jordan curves with $\beta_i\cap \I(\beta_j)=\emptyset$ for all $i,j\in\{1,\ldots,\igot\}$. Denote by $\beta=\cup_{i=1}^\igot \beta_i$. Obviously, $b\overline{U}=\beta\cup\I(\beta)$ and $\beta\cap\I(\beta)=\emptyset$.

For any $P\in \beta$ we choose a simply connected open neighborhood $O_P$ of $P$ in $\overline{U}\cap \Tsf$ meeting the following requirements:
\begin{enumerate}[{\rm ({A}1)}]
\item $\Pgot(Q)\in O_P\cap \beta_i$ for all $Q\in O_P$ and $P\in \beta_i$, $i=1,\ldots,\igot$.
\item $\max\big\{\|X(Q_1)-X(Q_2)\|\,,\,\|\Fgot(\Pgot(Q_1))-\Fgot(\Pgot(Q_2))\|\big\}<\epsilon_0,$ for all $\{Q_1,Q_2\}\subset O_P$ and $P\in \beta_i$, $i=1,\ldots,\igot$.
\item $\|X-\Fgot\circ\Pgot\|_{0,O_P}<\mu$ for all $P\in \beta$.
\item $O_P\cap\I(O_P)=\emptyset$ for all $P\in \beta$.
\end{enumerate}
Notice that {\rm (A3)} is ensured by hyphotesis \eqref{eq:lemma0}, provided that $O_P$ is chosen small enough. To guarantee {\rm (A2)}, just take $O_P$ sufficiently small and use the continuity of $X$, $\Fgot$, and $\Pgot$.

Set 
\[
\Ogot=\{O_P\colon P\in \beta\},
\]
and observe that $\Ogot\cap \beta:=\{O_P\cap \beta \colon P\in \beta\}$ is an open covering of $\beta.$ Choose $M\in\Bscr_\I(\Ncal)$ satisfying that
\begin{equation}\label{eq:M} 
K\subset M \Subset U, \enskip \overline{U}\setminus M \subset \cup_{P\in \beta} (O_P\cup \I(O_P)), \text{and}\enskip \Pgot|_{b\overline M}\colon b\overline M\to b\overline U \text{ is one to one}.
\end{equation}
For instance, one can take $M$ as the complement in $\overline U$ of a sufficiently small metric tubular neighborhood of $b\overline U$.

Since $U,M\in\Bscr_\I(\Ncal)$ and \eqref{eq:M}, then  
\begin{equation}\label{eq:inT}
\overline U\setminus M=\cup_{i=1}^\igot (A_i\cup\I(A_i))\subset \Tsf;
\end{equation}
where $\{A_i\}_{i=1}^\igot$ are pairwise disjoint compact annuli with $A_i\cap \I(A_j)=\emptyset$ for all $i,j$, and $\beta_i\subset A_i$, $i=1,\ldots,\igot$. Denote $\alpha_i=A_i\cap b\overline{M}$, $i=1,\ldots,\igot$, $\alpha=\cup_{i=1}^\igot \alpha_i$, and $A=\cup_{i=1}^\igot A_i$. Obviously, $b\overline{M}=\alpha\cup\I(\alpha)$, $\alpha\cap\I(\alpha)=\emptyset$, $\overline{U}\setminus M=A\cup\I(A)$, $A\cap\I(A)=\emptyset$, and $A\subset\cup_{P\in\beta}O_P$.
In particular, $\Ogot\cap A:=\{O_P\cap A\colon P\in \beta\}$ is an open covering of $A$. 

Denote by $\z_n$ the additive cyclic group of integers modulus $n$, $n\in\n$. Since $\Ogot\cap A$ is an open covering of the compact set $\alpha$ in $A$, then there exist $\jgot\in\n,$ $\jgot\geq 3,$ and a family of compact Jordan arcs $\{\alpha_{i,j}\colon (i,j)\in I=\{1,\dots,\igot\}\times\z_\jgot\}$ meeting the following requirements:
\begin{enumerate}[{\rm ({B}1)}]
\item $\cup_{j\in\z_\jgot} \alpha_{i,j}=\alpha_i$ for all $i\in\{1,\dots,\igot\}$.
\item $\alpha_{i,j}$ and $\alpha_{i,j+1}$ have a common endpoint $Q_{i,j}$ and are otherwise disjoint for all $(i,j)\in I$.
\item $\alpha_{i,j}\cap \alpha_{i,k}=\emptyset$ for all $(i,j)\in I$ and $k\in \z_\jgot\setminus\{j,j+1\}$.
\item $\alpha_{i,j}\cup\alpha_{i,j+1}\subset O_{i,j}\in\Ogot$ for all $(i,j)\in I$.
\end{enumerate}

Up to suitably trimming the $O_{i,j}$'s we can further assume that
\begin{enumerate}[{\rm ({B}1)}]
\item[\rm (B5)] $O_{i,j-1}\cap O_{i,j}\cap O_{i,j+1}$ is connected for any $(i,j)\in I$. Observe that $Q_{i,j}\in O_{i,j-1}\cap O_{i,j}\cap O_{i,j+1}\neq\emptyset$ by {\rm (B2)} and {\rm (B4)}.
\end{enumerate}

For any $(i,j)\in I$, choose a point $P_{i,j}\in O_{i,j-1}\cap O_{i,j}$ and set 
\begin{equation}\label{eq:eij}
e_{i,j}^3:=\left\{
\begin{array}{ll}
\frac{X(P_{i,j})-\Fgot(\Pgot(P_{i,j}))}{\|X(P_{i,j})-\Fgot(\Pgot(P_{i,j}))\|} & \text{if }X(P_{i,j})-\Fgot(\Pgot(P_{i,j}))\neq 0
\vspace*{0.2cm}\\
\text{any vector in $\s^2$} & \text{if }X(P_{i,j})-\Fgot(\Pgot(P_{i,j}))= 0.\\
\end{array}\right.
\end{equation}
Observe that $e_{i,j}^3\in\s^2$ and the orthogonal projection of $X(P_{i,j})-\Fgot(\Pgot(P_{i,j}))$ into the orthogonal complement of $e_{i,j}^3$ in $\r^3$ equals $0$.

For any $(i,j)\in I$ choose $\{e_{i,j}^1,e_{i,j}^2,e_{i,j}^3\}$ an orthonormal basis of $\r^3$, and denote by $B_{i,j}\in{\rm O}(3,\r)$ the orthogonal matrix of change of coordinates in $\r^3$ from the canonical basis to the basis $\{e_{i,j}^1,e_{i,j}^2,e_{i,j}^3\}$; i.e., 
\begin{equation}\label{eq:Bij}
B_{i,j}=\Big({e_{i,j}^1}^T\,,\,{e_{i,j}^2}^T\,,\,{e_{i,j}^3}^T\Big)^{-1},
\end{equation}
where $\cdot^T$ means transpose.

Let $\{r_{i,j}\colon (i,j)\in I\}$ be a family of pairwise disjoint analytical compact Jordan arcs in $A$ meeting the following requirements:
\begin{enumerate}[\rm ({C}1)]
\item $r_{i,j}\subset O_{i,j-1}\cap O_{i,j}\cap O_{i,j+1}$ for all $(i,j)\in I$.
\item $r_{i,j}$ has initial point $Q_{i,j}$, final point $\Pgot(Q_{i,j})$, and it is otherwise disjoint from $\alpha_i\cup\beta_i,$  for all $(i,j)\in I$.
\item The set $S:=\overline{M}\cup\big(\cup_{(i,j)\in I} (r_{i,j}\cup\I(r_{i,j}))\big)\subset \overline U\subset \Ncal$ is $\I$-admissible in the sense of Def.\ \ref{def:admi}. 
\end{enumerate} 
See Fig.\ \ref{fig:compact}. For instance, one can take $r_{i,j}=\Pgot^{-1}(\Pgot(Q_{i,j}))\cap (\overline U\setminus M)$ for all $(i,j)\in I$; see \eqref{eq:M}. 

Properties {\rm (C1)} and {\rm (C2)} are possible thanks to \eqref{eq:M}, {\rm (A1)}, {\rm (B2)}, {\rm (B4)}, and {\rm (B5)}. Notice that $r_{i,j}\cap\I(r_{i,j})=\emptyset$ for all $(i,j)\in I$; see {\rm (A4)} and {\rm (C1)}.

The first main step in the proof of Lemma \ref{lem:compact} consists of deforming $X$ over $\cup_{(i,j)\in I}(r_{i,j}\cup\I(r_{i,j}))$. To do this we first extend $X|_{\overline M}$ to an $\I$-invariant generalized minimal immersion $\widehat X\in \Mcal_{\ggot,\I}(S)$ (see Def.\ \ref{def:gene-min}) enjoying the following properties:
\begin{enumerate}[\rm (a)]
\item $\widehat X|_{\overline{M}}=X$.
\item $\|\widehat X(P)-\widehat X(Q)\|<\epsilon_0$ for all $\{P,Q\}\subset r_{i,j}$, for all $(i,j)\in I$.
\item If $\Upsilon\subset r_{i,j}$ is a Borel measurable subset, then
\begin{eqnarray*}
\min\big\{\ell\big(\pi_{i,j}(\widehat X(\Upsilon))\big)\,,\,\ell\big(\pi_{i,j+1}(\widehat X(\Upsilon))\big)\big\} & + & \\
\min\big\{\ell\big(\pi_{i,j}(\widehat X(r_{i,j}\setminus\Upsilon))\big)\,,\,\ell\big(\pi_{i,j+1}(\widehat X(r_{i,j}\setminus\Upsilon))\big)\big\} & > &2\rho,
\end{eqnarray*}
where $\ell$ denotes Euclidean length in $\r^3$ and 
\begin{equation}\label{eq:piij}
\text{$\pi_{i,j}\colon \r^3\to {\rm span}\{e_{i,j}^3\}\leq\r^3$ the orthogonal projection,}
\end{equation}
for all $(i,j)\in I$.
\end{enumerate}

To construct $\widehat X$ we first define it over each arc $r_{i,j}$ to be highly oscillating in the direction of both $e_{i,j}^3$ and $e_{i,j+1}^3$ (property {\rm (c)}), but with small diameter in $\r^3$ (property {\rm (b)}). We then define $\widehat X$ over each arc $\I(r_{i,j})$ just to be $\I$-invariant.

Theorem \ref{th:Mergelyan} applied to any marked immersion $\widehat X_\varpi=(\widehat X,\varpi)\in\Mcal_{\ggot,\I}^*(S)$ and $\pgot=\pgot_{\widehat X_\varpi}=\pgot_X\colon\Hcal_1(\Ncal,\z)\to\r^3$ (recall that $M,U\in \Bscr_\I(\Ncal)$ and see property {\rm (a)} of $\widehat X$), furnishes an $\I$-invariant conformal minimal immersion $F\in\Mcal_\I(\overline{U})$ satisfying:
\begin{enumerate}[\rm ({D}1)]
\item $\|F-X\|_{1,\overline{M}}<\epsilon_0$.
\item $\|F-X\|_{0,S}<2\epsilon_0$.
\item If $\Upsilon\subset r_{i,j}$ is a Borel measurable subset, then
\begin{eqnarray*}
\min\big\{\ell\big(\pi_{i,j}(F(\Upsilon))\big)\,,\,\ell\big(\pi_{i,j+1}(F(\Upsilon))\big)\big\} & + & \\
\min\big\{\ell\big(\pi_{i,j}(F(r_{i,j}\setminus\Upsilon))\big)\,,\,\ell\big(\pi_{i,j+1}(F(r_{i,j}\setminus\Upsilon))\big)\big\} & > &2\rho,
\end{eqnarray*}
for all $(i,j)\in I$; see \eqref{eq:piij}.
\item The flux map $\pgot_F$ of $F$ equals the one $\pgot_X$ of $X$.
\end{enumerate}
Take into account properties {\rm (a)}, {\rm (b)}, and {\rm (c)} of $\widehat X$. For {\rm (D2)}, recall that $\widehat X(Q_{i,j})=X(Q_{i,j})$ for all $(i,j)\in I$ and use properties {\rm (C1)}, {\rm (A2)}, and {\rm (b)}.

By continuity of $F$, there exists $W\in\Bscr_\I(\Ncal)$ such that:
\begin{enumerate}[\rm ({E}1)]
\item $K\subset M\Subset W\subset U$.
\item $S\subset \overline{W}$ and $S\cap b\overline{W}=\cup_{(i,j)\in I} \{\Pgot(Q_{i,j}),\I(\Pgot(Q_{i,j}))\}$; recall {\rm (C2)}.
\item $\overline{W}\setminus M=\cup_{(i,j)\in I} \big(\big(\widetilde{\alpha}_{i,j}\cup \I(\widetilde{\alpha}_{i,j})\big)\cup  \big(\widetilde{r}_{i,j}\cup \I(\widetilde{r}_{i,j})\big)\big)$, where $\widetilde{\alpha}_{i,j}$ and $\widetilde{r}_{i,j}$ are simply connected compact neighborhoods of $\alpha_{i,j}$ and $r_{i,j}$, respectively, in $\overline{W}\setminus M$, such that
\begin{itemize}
\item $\widetilde{\alpha}_{i,j}\cap \widetilde{r}_{i,k}=\emptyset$ for all $(i,j)\in I$ and $k\in \z_\jgot\setminus \{j-1,j\}$.
\item $\widetilde{r}_{i,j}\cap \widetilde{r}_{i,k}=\emptyset$ for all $(i,j)\in I$ and $k\in \z_\jgot\setminus \{j\}$.
\end{itemize}
\item $\|F-X\|_{1,\overline{M}}<\epsilon_0$.
\item $\|F-X\|_{0,\overline W}<2\epsilon_0$.
\item Denote by $\gamma_{i,j}$ the piece of $b\overline{W}$ connecting $\Pgot(Q_{i,j-1})$ and $\Pgot(Q_{i,j}),$ and containing $\Pgot(Q_{i,k})$ for no $k\in\z_\jgot\setminus\{j-1,j\}$. Then $\gamma_{i,j}$ is split into three compact connected sub-arcs $\gamma_{i,j}^{-1}\subset \Fr(\widetilde{r}_{i,j-1})$, $\Gamma_{i,j}$, and $\gamma_{i,j}^{1}\subset \Fr(\widetilde{r}_{i,j})$, such that $\gamma_{i,j}^{-1}\cap \gamma_{i,j}^{1}=\emptyset$, $\Gamma_{i,j}$ has a common point with $\gamma_{i,j}^{-1}$ and a common point with $\gamma_{i,j}^{1}$, and the following assertion holds:

For any arc $\sigma\subset \widetilde r_{i,j}$ connecting $\widetilde{\alpha}_{i,j}\cup \widetilde{\alpha}_{i,j+1}$ and $\gamma_{i,j}^1\cup \gamma_{i,j+1}^{-1}$, if $\sigma=\sigma_0\cup\sigma_1$, where $\sigma_k$ is a collection of subarcs of $\sigma$ contained in the closure of the connected component of $\widetilde r_{i,j}\setminus r_{i,j}$ intersecting $\alpha_{i,j+k}$, $k=0,1$, then
\[
\ell\big(\pi_{i,j}(F(\sigma_0))\big)+\ell\big(\pi_{i,j+1}(F(\sigma_1))\big)  >2\rho \enskip\text{for all $(i,j)\in I$.}
\]
\item $(\partial F B_{i,j}^T)_3$ vanishes nowhere on $b\overline W$ for all $(i,j)\in I$; here $(\cdot)_3$ means third coordinate in $\r^3$ and $B_{i,j}$ is given by \eqref{eq:Bij}.
\end{enumerate}
See Fig.\ \ref{fig:compact}.
\begin{figure}[ht]
    \begin{center}
    \scalebox{0.45}{\includegraphics{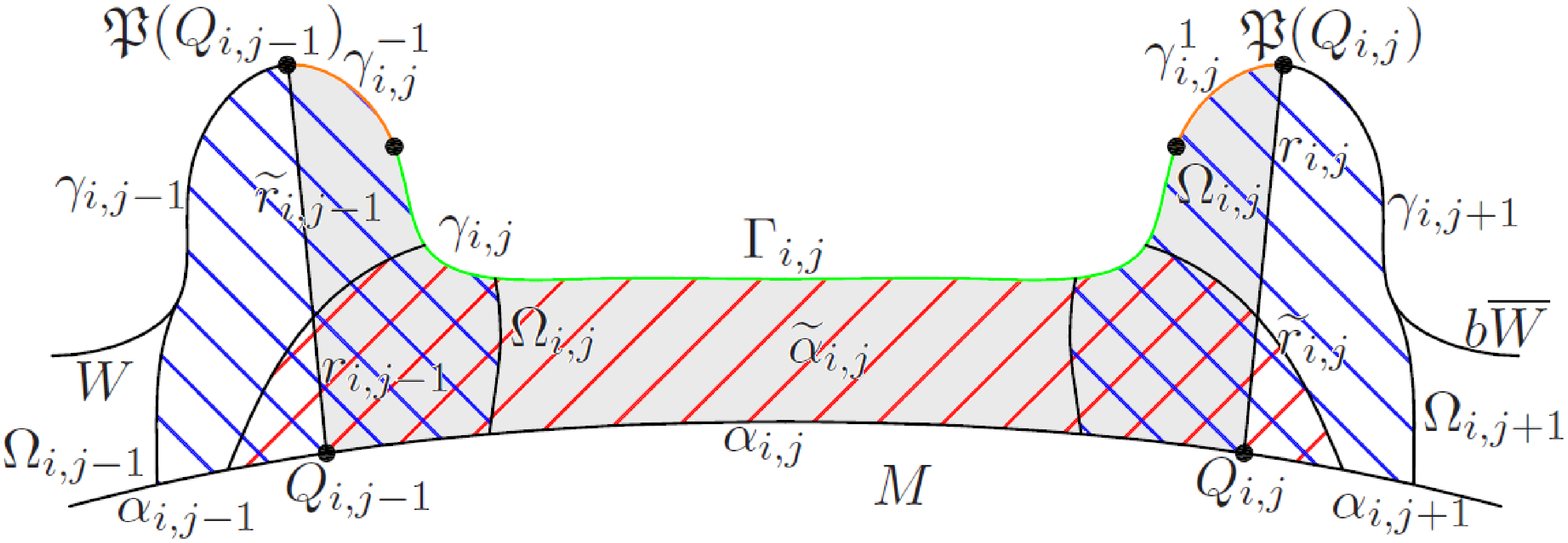}}
        \end{center}
\caption{$\overline W\setminus M$}
\label{fig:compact}
\end{figure}

Denote by $\Omega_{i,j}$ the closure of the connected component of $\overline W\setminus S$ bounded by $r_{i,j-1}$, $\alpha_{i,j}$, $\alpha_{i,j+1}$, and $\gamma_{i,j}$, for all $(i,j)\in I$. Observe that $\Omega_{i,j}$ is a closed disc for all $(i,j)\in I$ and $\overline W= \overline M\cup (\cup_{(i,j)\in I} (\Omega_{i,j}\cup \I(\Omega_{i,j})))$. See Fig.\ \ref{fig:compact}.

Let $\eta\colon \{1,\ldots,\igot\jgot\}\to\{1,\ldots,\igot\}\times \z_\jgot$ be the bijection $\eta(k)=(E(\frac{k-1}{\jgot})+1,k-1),$ where $E(\cdot)$ means integer part. 

The second main step in the proof of Lemma \ref{lem:compact} consists of deforming $F$ over each disc $\Omega_{\eta(k)}$, $k\in \{1,\ldots,\igot\jgot\}$. For that, let us recursively construct a sequence $\{F_0=F, F_1,\ldots, F_{\igot\jgot}\}\subset \Mcal_\I(\overline U)$ satisfying the following properties for all $k\in \{0,1,\ldots,\igot\jgot\}$:
\begin{enumerate}[\rm ({F}.1$_{k}$)]
\item $\|F_k-X\|_{1,\overline M}<\epsilon_0$.

\item $\|F_k-X\|_{0, S\cup\big(\bigcup_{a=k+1}^{\igot\jgot}(\Omega_{\eta(a)}\cup \I(\Omega_{\eta(a)}))\big)}<2\epsilon_0$.

\item $\langle F_k-F_{k-1},e_{\eta(k)}^3 \rangle=0$, $k\geq 1$.

\item $\|F_k(Q)-X(Q)\|>2\rho+1,$ $\forall Q\in \Gamma_{\eta(a)}$, $\forall a\in\{1,\ldots, k\}$, $k\geq 1$.

\item For any arc $\sigma\subset \widetilde r_{\eta(a)}$ connecting $\widetilde{\alpha}_{\eta(a)}\cup \widetilde{\alpha}_{\eta(a)+(0,1)}$ and $\gamma_{\eta(a)}^1\cup \gamma_{\eta(a)+(0,1)}^{-1}$, then
\[
\ell\big(\pi_{\eta(a)}(F_k(\sigma\cap \Omega_{\eta(a)}))\big)+\ell\big(\pi_{\eta(a)+(0,1)}(F_k(\sigma\cap \Omega_{\eta(a)+(0,1)}))\big)  >2\rho\enskip \forall a\in \{1,\ldots,\igot\jgot\}.
\]
Recall that $\pi_{\eta(a)}\colon \r^3\to {\rm span}\{e_{\eta(a)}^3\}$ is the orthogonal projection; see \eqref{eq:piij}.

\item $\|F_k-F_{k-1}\|_{1,\overline{W\setminus (\Omega_{\eta(k)}\cup \I(\Omega_{\eta(k)}))}} <\epsilon_0/\igot\jgot$, $k\geq 1$.

\item $(\partial F_{k} B_{\eta(a)}^T)_3$ vanishes nowhere on $b\overline W$ for all $a\in\{1,\ldots,\igot\jgot\}$.

\item The flux map $\pgot_{F_k}$ of $F_k$ equals the one $\pgot_X$ of $X$.
\end{enumerate}

Indeed, observe that {\rm (F.1$_0$)}={\rm (E4)}, {\rm (F.2$_0$)} is implied by {\rm (E5)}, {\rm (F.5$_0$)}={\rm (E6)}, and {\rm (F.7$_0$)}={\rm (E7)}, whereas {\rm (F.3$_0$)}, {\rm (F.4$_0$)}, and {\rm (F.6$_0$)} make no sense. Finally {\rm (F.8$_0$)} follows from {\rm (D4)}. Reason by induction and assume that we already have $F_0,\ldots, F_{k-1}$, for some $k\in\{1,\ldots,\igot\jgot\}$, satisfying the corresponding properties. Let us construct $F_k$.

Denote $G=(G_1,G_2,G_3):=F_{k-1} B_{\eta(k)}^T\in \Mcal_\I(\overline U)$. Obviously, $G\in\Mcal_\I(\overline U)$ and 
\begin{equation}\label{eq:G3}
G_3=\langle F_{k-1}, e_{\eta(k)}^3\rangle
\end{equation}
(see.\ \eqref{eq:Bij}). Denote
\[
S_k:= \overline M \cup \Big(\Gamma_{\eta(k)}\cup \I(\Gamma_{\eta(k)})\Big)\cup \Big(\bigcup_{a\neq k} \Omega_{\eta(a)}\cup \I(\Omega_{\eta(a)})\Big)
\]
and observe that $S_k$ is $\I$-admissible (Def.\ \ref{def:admi}). Observe also that $S_k$ has exactly three connected components, which are $\overline M \cup \Big(\bigcup_{a\neq k} \Omega_{\eta(a)}\cup \I(\Omega_{\eta(a)})\Big)=\overline{W\setminus(\Omega_{\eta(k)}\cup\I(\Omega_{\eta(k)}))}$, $\Gamma_{\eta(k)}$, and $\I(\Gamma_{\eta(k)})$; see {\rm (E3)}.

Extend $G|_{S_k\setminus (\Gamma_{\eta(k)}\cup \I(\Gamma_{\eta(k)}))}$ to an $\I$-invariant generalized minimal immersion $\widehat G=(\widehat G_1,\widehat G_2,\widehat G_3)\in\Mcal_{\ggot,\I}(S_k)$ (Def. \ref{def:gene-min}) such that
$
\widehat G_3= G_3|_{S_k}
$
and
\begin{equation}\label{eq:hatG}
\|\widehat G(Q)- X(Q)B_{\eta(k)}^T\|>2\rho+1\enskip \text{for all }Q\in \Gamma_{\eta(k)}.
\end{equation}
Take for instance $\widehat G|_{\Gamma_{\eta(k)}}=(x_0,y_0,0)+G|_{\Gamma_{\eta(k)}}$ for any constant $(x_0,y_0)\in\r^2$ with sufficiently large norm. Then define $\widehat G|_{\I(\Gamma_{\eta(k)})}$ to be $\I$-invariant.

In view of {\rm (F.7$_{k-1}$)}, assertion {\rm (II)} in Theorem \ref{th:Mergelyan} applied to any marked immersion $\widehat G_\varpi=(\widehat G,\varpi)\in\Mcal_{\ggot,\I}^*(S_k)$ such that $(\partial \widehat G_\varpi)_3=(\partial G_3)|_{S_k}$ and  $\pgot=\pgot_{\widehat G_\varpi}=\pgot_G\colon\Hcal_1(\Ncal,\z)\to\r^3$, provides an $\I$-invariant conformal minimal immersion $\widetilde G=(\widetilde G_1,\widetilde G_2,\widetilde G_3)\in\Mcal_\I(\overline U)$ such that 
\begin{equation}\label{eq:tildeG}
\widetilde G_3=G_3
\end{equation}
and
\begin{equation}\label{eq:tildeG2}
\text{the flux map $\pgot_{\widetilde G}$ of $\widetilde G$ equals the one $\pgot_G$ of $G$.}
\end{equation}
Furthermore, if the approximation of $\widehat G$ by $\widetilde G$ in $S_k$ is close enough, then $F_k:=\widetilde G (B_{\eta(k)}^T)^{-1}\in\Mcal_\I(\overline U)$ meets conditions $({\rm F}.1_k)$--$({\rm F}.7_k)$.

Indeed, observe that \eqref{eq:Bij} implies that $\widetilde G_3=\langle F_k, e_{\eta(k)}^3\rangle$; hence \eqref{eq:tildeG} and \eqref{eq:G3} ensure {\rm (F.3$_k$)}. Since $\widehat G$ agrees with $G$ on $S_k\setminus (\Gamma_{\eta(k)}\cup \I(\Gamma_{\eta(k)}))=\overline{W\setminus(\Omega_{\eta(k)}\cup\I(\Omega_{\eta(k)}))}\supset S \cup\big(\bigcup_{a=k+1}^{\igot\jgot}\Omega_{\eta(a)}\cup \I(\Omega_{\eta(a)})\big)$, then {\rm (F.1$_{k-1}$)} and {\rm (F.2$_{k-1}$)} guarantee {\rm (F.1$_k$)} and {\rm (F.2$_k$)}, respectively, whereas {\rm (F.6$_k$)} directly follows. Likewise,  \eqref{eq:hatG} guarantees {\rm (F.4$_k$)}. Finally, since $\|F_k-F_{k-1}\|_{1,\overline{W\setminus (\Omega_{\eta(k)}\cup \I(\Omega_{\eta(k)}))}}\approx 0$ and $\pi_{\eta(k)}\circ F_k=\pi_{\eta(k)}\circ F_{k-1}$ (see {\rm (F.3$_k$)} and \eqref{eq:piij}), then {\rm (F.5$_{k-1}$)}  and {\rm (F.7$_{k-1}$)} ensure {\rm (F.5$_k$)} and {\rm (F.7$_k$)}, respectively. Finally, \eqref{eq:tildeG2} and {\rm (F.8$_{k-1}$)} give {\rm (F.8$_k$)}.

This concludes the construction of the sequence $\{F_0=F, F_1,\ldots, F_{\igot\jgot}\}\subset \Mcal_\I(\overline U)$.

Label $H:=F_{\igot\jgot}\in\Mcal_\I(\overline U)$. Let us prove that
\begin{claim}\label{cla:dist1}
${\rm dist}_H(\overline M,b\overline W)>2\rho.$
\end{claim}
\begin{proof}
Consider a connected curve $\sigma$ in $\overline U$ with initial point $Q\in \overline M$ and final point $P\in b\overline W$. It suffices to show that $\ell(H(\sigma))>2\rho$. Assume without loss of generality that $(\sigma\setminus\{P,Q\})\cap(\overline M\cup b\overline W)=\emptyset$ and, up to possibly replacing $\sigma$ by $\I(\sigma)$, that $\sigma\subset \cup_{k=1}^{\igot\jgot} \Omega_{\eta(k)}$. Let us distinguish cases. 

Assume $P\in \Gamma_{\eta(k)}$ for some $k\in\{1,\ldots,\igot\jgot\}$. Then there exists a point $Q_0\in\sigma\cap (r_{\eta(k)-(0,1)}\cup\alpha_{\eta(k)}\cup r_{\eta(k)})$ and it follows that 
\begin{eqnarray*}\label{eq:enK}
\ell(H(\sigma)) & \geq & \|H(P)-H(Q_0)\|\\
 & \geq & \|(H-X)(P)\|-\|X(P)-X(Q_0)\|-\|(H-X)(Q_0)\|
\\
 & \stackrel{\text{{\rm (F.4$_{\igot\jgot}$)},{\rm (A2)},{\rm (F.2$_{\igot\jgot}$)}}}{\geq} & 2\rho+1 - \epsilon_0 -2\epsilon_0 > 2\rho.
\end{eqnarray*}
For the latter inequality we assume from the beginning $\epsilon_0<1/3$.

Assume now that $P\in \Gamma_{\eta(k)}$ for no $k\in\{1,\ldots,\igot\jgot\}$. In this case there exists $k\in\{1,\ldots,\igot\jgot\}$ such that $P\in \gamma_{\eta(k)}^1\cup\gamma_{\eta(k)+(0,1)}^{-1}\subset\widetilde r_{\eta(k)}$. Therefore, there exists a connected sub-arc $\widehat\sigma\subset\sigma\cap \widetilde r_{\eta(k)}$ connecting $\widetilde\alpha_{\eta(k)}\cup\widetilde\alpha_{\eta(k)+(0,1)}\neq \emptyset$ and $P$. Then {\rm (F.5$_{\igot\jgot}$)} gives $\ell(H(\sigma))\geq  \ell(H(\widehat\sigma))>2\rho$. This proves the claim
\end{proof}

Let us now prove the following
\begin{claim}\label{cla:dist2}
The inequality 
\[
\|H(Q)-\Fgot(\Pgot(Q))\|<\sqrt{4\rho^2+\mu^2}+\epsilon,
\]
where $\rho$, $\mu$, $\epsilon$, and $\Fgot$ are the data given in the statement of Lemma \ref{lem:compact}, is satisfied for any $Q\in \overline W\setminus M$ such that ${\rm dist}_H(Q,\overline M)<2\rho$.
\end{claim}
\begin{proof}
Choose $Q\in \overline W\setminus M$ with ${\rm dist}_H(\overline M,Q)<2\rho$. In view of Claim \ref{cla:dist1} and up to possibly replacing without loss of generality $Q$ by $\I(Q)$, there exist $k\in\{1,\ldots,\igot\jgot\}$ and $P\in r_{\eta(k)-(0,1)}\cup\alpha_{\eta(k)}\cup r_{\eta(k)}\subset S$ such that
\begin{equation}\label{eq:2rho}
Q\in\Omega_{\eta(k)}\setminus\gamma_{\eta(k)} \quad\text{and}\quad \dist_H(P,Q)<2\rho.
\end{equation}
By Pitagoras Theorem,
\begin{multline}\label{eq:pita1}
\|H(Q)-\Fgot(\Pgot(Q))\|=
\\
 \sqrt{\langle H(Q)-\Fgot(\Pgot(Q)),e_{\eta(k)}^3\rangle^2+\|\Theta_{\eta(k)}\big(H(Q)-\Fgot(\Pgot(Q))\big)\|^2},
\end{multline}
where $\Theta_{\eta(k)}\colon\r^3\to {\rm span}\{e_{\eta(k)}^1,e_{\eta(k)}^2\}$ is the orthogonal projection (see \eqref{eq:Bij}).

On the one hand,  
\begin{multline*}
|\langle H(Q)-\Fgot(\Pgot(Q)),e_{\eta(k)}^3\rangle|\leq \|H(Q)-F_k(Q)\|
+ |\langle (F_k-F_{k-1})(Q),e_{\eta(k)}^3\rangle| 
\\
+ \|F_{k-1}(Q)-X(Q)\|
+ \|X(Q)-\Fgot(\Pgot(Q))\|;
\end{multline*}
hence, in view of {\rm (F.6$_a$)}, $a=k+1,\ldots,\igot\jgot$, {\rm (F.3$_k$)}, {\rm (F.2$_{k-1}$)}, and {\rm (A.3)},
\begin{equation}\label{eq:pita2}
|\langle H(Q)-\Fgot(\Pgot(Q)),e_{\eta(k)}\rangle| < \epsilon_0 + 0 + 2\epsilon_0 + \mu=\mu+3\epsilon_0.
\end{equation}

On the other hand, 
\begin{multline*}
\|\Theta_{\eta(k)}\big(H(Q)-\Fgot(\Pgot(Q))\big)\|\leq 
\\
\|H(Q)-H(P)\|
+ \|H(P)-X(P)\|
+ \|X(P)-X(P_{\eta(k)})\|+
\\
+ \|\Theta_{\eta(k)}\big(X(P_{\eta(k)})-\Fgot(\Pgot(P_{\eta(k)}))\big)\|
+ \|\Fgot(\Pgot(P_{\eta(k)}))-\Fgot(\Pgot(Q))\|;
\end{multline*}
hence, using \eqref{eq:2rho}, {\rm (F.2$_{\igot\jgot}$)}, {\rm (A.2)}, \eqref{eq:eij}, and again {\rm (A.2)},
\begin{equation}\label{eq:pita3}
\|\Theta_{\eta(k)}\big(H(Q)-\Fgot(\Pgot(Q))\big)\|< 2\rho+2\epsilon_0+\epsilon_0+ 0 + \epsilon_0=2\rho+4\epsilon_0.
\end{equation}
Combining \eqref{eq:pita1}, \eqref{eq:pita2}, and \eqref{eq:pita3} one obtains
\[
\|H(Q)-\Fgot(\Pgot(Q))\|<\sqrt{(\mu+3\epsilon_0)^2+(2\rho+4\epsilon_0)^2}<\sqrt{4\rho^2+\mu^2}+\epsilon
\]
as claimed (the latter inequality is satisfied provided that $\epsilon_0$ is chosen small enough from the beginning).
\end{proof}

Choose a bordered domain $V\in \Bscr_\I(\Ncal)$ satisfying that
\begin{equation}\label{eq:V1}
M\Subset V \Subset W
\end{equation}
and
\begin{equation}\label{eq:V2}
\rho<\dist_H (Q,\overline M)<2\rho\enskip \forall Q\in b\overline V.
\end{equation}
Existence of such a $V$ is guaranteed by Claim \ref{cla:dist1}. Moreover, up to a slight deformation of the domain $V$, it can be ensured in addition that
\begin{equation}\label{eq:V3}
H|_{\Gamma}\colon \Gamma\to \r^3\enskip \text{is injective},
\end{equation}
where $\Gamma:=b\overline V\cap (\cup_{i=1}^\igot A_i)$ (see \eqref{eq:inT}). In particular, $b\overline V=\Gamma\cup \I(\Gamma)$ and $\Gamma\cap\I(\Gamma)=\emptyset$.

The domain $V$ and the map $Y:=H|_{\overline V}\in\Mcal_\I(\overline V)$ satisfy the conclusion of Lemma \ref{lem:compact}. Indeed, property {\rm (i)} follows from \eqref{eq:V1}, \eqref{eq:M}, and \eqref{eq:inT}; {\rm (ii)}$=$\eqref{eq:V3}; {\rm (iii)} is implied by \eqref{eq:V2}; {\rm (iv)} is given by {\rm (F.1$_{\igot\jgot}$)} and \eqref{eq:M}; {\rm (v)} follows from Claim \ref{cla:dist2} and \eqref{eq:V2}; and {\rm (vi)}={\rm (F.8$_{\igot\jgot}$)}.

This concludes the proof of Lemma \ref{lem:compact}.
\end{proof}


The following application of Lemma \ref{lem:compact} is the key tool in this section.

\begin{lemma}\label{lem:compact2}
Let $U\in\Bscr_\I(\Ncal)$, let $K\subset U$ be a compact set, and let $X\in\Mcal_\I(\overline{U})$.

Then, for any $\epsilon>0$ there exist $V\in\Bscr_\I(\Ncal)$ and $Y\in\Mcal_\I(\overline{V})$ enjoying the following properties:
\begin{enumerate}[\rm (i)]
\item $K\subset V\Subset U$.
\item If $b\overline V=\Gamma\cup \I(\Gamma)$ with $\Gamma\cap\I(\Gamma)=\emptyset$, then $Y|_{\Gamma}\colon \Gamma\to\r^3$ is an embedding.
\item $\dist_Y(K,b\overline{V})>1/\epsilon$.
\item $\|Y-X\|_{1,K}<\epsilon$.
\item $\|Y-X\|_{0,\overline V}<\epsilon$.
\item The Hausdorff distance $\delta^{\rm H}\big(Y(b\overline{V}),X(b\overline{U})\big)<\epsilon$.
\item The flux map $\pgot_Y$ of $Y$ equals the one $\pgot_X$ of $X$.
\end{enumerate}
\end{lemma}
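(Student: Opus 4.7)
The plan is to iterate Lemma \ref{lem:compact} to produce a nested sequence of $\I$-invariant bordered domains $V_0=U\Supset V_1\Supset V_2\Supset\cdots$ with $K\subset V_n$ and conformal minimal immersions $Y_n\in\Mcal_\I(\overline{V_n})$ (with $Y_0:=X$), and then to set $(V,Y):=(V_N,Y_N)$ for sufficiently large $N$. At step $n\geq 1$ I will apply Lemma \ref{lem:compact} with ambient $V_{n-1}$, datum $Y_{n-1}$ in the role of $X$, a thin $\I$-invariant metric tubular neighborhood $\Tsf_{n-1}$ of $b\overline{V_{n-1}}$ with projection $\Pgot_{n-1}$, an $\I$-invariant analytical target $\Fgot_{n-1}\colon b\overline{V_{n-1}}\to\r^3$ approximating $Y_{n-1}|_{b\overline{V_{n-1}}}$ within $\mu_{n-1}$, a growing compact core $K_{n-1}\Subset V_{n-1}$ (with $K\subset K_{n-1}$) playing the role of the compact $K$ in the lemma, and positive parameters $\rho_n,\epsilon_n$. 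The trim $V_{n-1}\setminus V_n$ will be chosen strictly narrower than the oscillation region produced by Lemma \ref{lem:compact} at step $n$, and $K_n$ will be chosen to lie compactly in $V_n$ while engulfing every previously built oscillation region. To feed the next step I take $\Fgot_n$ to be an analytic approximation of $\Fgot_{n-1}\circ\Pgot_{n-1}|_{b\overline{V_n}}$; with this choice conclusion (v) of Lemma \ref{lem:compact} yields a Pythagorean recursion $\mu_n^2\leq \mu_{n-1}^2+4\rho_n^2+O(\epsilon_n)$.

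I choose the sequences so that $\sum_n\rho_n=\infty$, $4\sum_n\rho_n^2<\epsilon^2/4$, $\sum_n\epsilon_n<\epsilon/8$, and $\mu_0<\epsilon/4$ (for instance $\rho_n=1/n$). Telescoping the Pythagorean recursion gives $\mu_n<\epsilon/2$ for every $n$. On the other hand, the $\Ccal^1$-approximation $\|Y_{n+1}-Y_n\|_{1,K_n}<\epsilon_{n+1}$ on a core $K_n$ that covers every oscillation region constructed so far implies that each such region of step $k\leq N$ continues to contribute essentially length $\rho_k$ to the $Y_N$-length of any arc in $\overline{V_N}$ that crosses it. Concatenating these contributions along an arc from $K$ to $b\overline{V_N}$ yields
\[
\dist_{Y_N}(K,b\overline{V_N})\geq \sum_{k=1}^N\rho_k-\sum_{k=1}^N\epsilon_k,
\]
which exceeds $1/\epsilon$ as soon as $N$ is large enough.

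Setting $V:=V_N$ and $Y:=Y_N$, properties (i), (ii) and (vii) transfer directly from the last application of Lemma \ref{lem:compact}; property (iii) is the intrinsic estimate above; and (iv) follows from telescoping the $\Ccal^1$-approximations on $K\subset K_n$. For (v) the key observation is that $Y_N-X$ has harmonic coordinates on $\overline{V_N}$, so $\|Y_N-X\|$ is subharmonic and the maximum principle forces $\|Y_N-X\|_{0,\overline{V_N}}=\|Y_N-X\|_{0,b\overline{V_N}}$; the right-hand side is in turn bounded by $\mu_N$ plus the oscillation of $X$ on the thin tubular layer $\overline U\setminus V_N$, both arranged to be less than $\epsilon/2$. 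Property (vi) follows from the same boundary estimate together with the surjectivity of the composed projection $b\overline{V_N}\to b\overline{V_{N-1}}\to\cdots\to b\overline U$.

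The main obstacle is the intrinsic-distance accumulation. One must verify that the oscillation region constructed at step $n$ survives inside $V_m$ for every $m>n$ and that its length contribution is not erased by later deformations; this is where the geometric book-keeping matters, namely choosing each trim $V_{n-1}\setminus V_n$ strictly narrower than the oscillation region of step $n$, and choosing the cores $K_n$ to contain all earlier oscillation regions while lying compactly in $V_n$. Once this is in place the length contributions add along any arc in $\overline{V_N}$ from $K$ to the new boundary, giving an intrinsic distance that grows like $\sum_{k\leq N}\rho_k$ and hence can be made arbitrarily large by taking $N$ large enough.
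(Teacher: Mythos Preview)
Your proposal is correct and follows essentially the same approach as the paper. Both iterate Lemma~\ref{lem:compact} with target $\Fgot_n$ obtained by composing the original boundary datum with the successive tubular projections, obtain the Pythagorean recursion $\mu_n^2\approx\mu_{n-1}^2+4\rho_n^2$ from conclusion~(v), and then stop the process at a finite stage $N$ once $\sum_{k\le N}\rho_k>1/\epsilon$; the paper's sequences are $\rho_n-\rho_{n-1}=\asf/n$ and $\mu_n=\sqrt{\mu_{n-1}^2+4(\asf/n)^2}+\asf/n^2$ with $\asf$ small, and the boundary estimate plus the maximum principle give (v) and (vi) exactly as you describe.

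The one place where you are actually more explicit than the paper is the accumulation of intrinsic distance: the paper simply records the inductive property $\rho_n<\dist_{Y_n}(K,b\overline{U_n})$ and refers to \cite[Proof of Claim~4.2]{AL-Israel} for the details, whereas you spell out that one must enlarge the compact core $K_n$ at each step to engulf the previous oscillation layer and use the $\Ccal^1$-closeness on $K_n$ to preserve the earlier length contributions. This is indeed the mechanism (the proof of Lemma~\ref{lem:compact} in fact yields $\dist_Y(\overline M,b\overline V)>\rho$ and $\|Y-X\|_{1,\overline M}<\epsilon_0$ for an auxiliary $M$ with $K\subset M\Subset U$, which is what makes the telescoping work). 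One small caveat: your prototype $\rho_n=1/n$ does not literally satisfy $4\sum_n\rho_n^2<\epsilon^2/4$ for small $\epsilon$; you need $\rho_n=c/n$ with $c$ small, just as the paper takes the constant $\asf$ small.
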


\begin{proof}
Let $\rho_1>0$. 

Let $\{\rho_n\}_{n\in\n}$ and $\{\mu_n\}_{n\in\n}$ be the sequences of positive numbers given by
\begin{equation}\label{eq:pitag} 
\rho_n=\rho_1+\sum_{j=2}^n \frac{\asf}{j}\quad\text{and}\quad \mu_n=\sqrt{\mu_{n-1}^2+4\big(\frac{\asf}{n}\big)^2}+\frac{\asf}{n^2},\quad \forall n\geq 2,
\end{equation}
where $\asf>0$ and $\mu_1>0$ are small enough constants so that
\begin{equation}\label{eq:asf}
\mu_n <\epsilon/2\enskip \forall n\in\n. 
\end{equation}

Call $U_0:=U.$ Let $\Tsf_0$ be a metric tubular neighborhood of $b \overline U_0$ in $\Ncal$ disjoint from $K$ and denote by $\Pgot_0\colon \Tsf_0\to b\overline U_0$ the natural projection.

A standard recursive application of Lemma \ref{lem:compact} gives a sequence $\{\Xi_n=(U_n,\Tsf_n,Y_n)\}_{n\in\n}$, where $U_n\in \Bscr_\I(\Ncal)$, $\Tsf_n$ is a metric tubular neighborhood of $b\overline U_n$ in $\Ncal$, and $Y_n\in \Mcal_\I(\overline U_n)$, satisfying the following conditions:
\begin{enumerate}[{\rm (1$_n$)}]
\item If $b\overline U_n=\Gamma\cup \I(\Gamma)$ with $\Gamma\cap\I(\Gamma)=\emptyset$, then $Y_n|_\Gamma\colon \Gamma\to \r^3$ is an embedding.
\item $K\subset U_n \Subset U_{n-1} \Subset U_0$ and $\Tsf_n \subset \Tsf_{n-1}\subset \Tsf_0$, for all $n\geq 1$.  
\item $\rho_n<\dist_{Y_n}(K,b\overline U_n)$.
\item $\max\{\|X-X\circ\Pgot_0\circ \ldots \circ \Pgot_{n-1} \|_{0,b\overline U_n}\,,\,\|Y_n-X\circ\Pgot_0\circ \ldots \circ \Pgot_{n-1} \|_{0,b\overline U_n}\}<\mu_n$ for all $n\geq 1$, where $\Pgot_j\colon \Tsf_j\to b\overline U_j$ denotes the orthogonal projection.
\item $\|Y_n-X\|_{1,K}<\epsilon$ on $K$.
\item The flux map $\pgot_{Y_n}$ of $Y_n$ equals the one $\pgot_X$ of $X$.
\end{enumerate}
See \cite[Proof of Claim 4.2]{AL-Israel} for details; here we use Lemma \ref{lem:compact} instead of \cite[Lemma 3.1]{AL-Israel}.

Choose $k\in\n$ such that 
\begin{equation}\label{eq:rhok}
\rho_k>1/\epsilon,
\end{equation}
recall that $\{\rho_n\}_{n\in\n}\nearrow +\infty$. The bordered domain $V:=U_k\in\Bscr_\I(\Ncal)$ and the map $Y:=Y_k\in\Mcal_\I(\overline V)$ satisfy the conclusion of the lemma.

Indeed, {\rm (i)} is implied by {\rm (2$_k$)}; {\rm (ii)}$=${\rm (1$_k$)}; {\rm (iii)} is ensured by \eqref{eq:rhok} and {\rm (3$_k$)}; {\rm (iv)}$=${\rm (5$_k$)}; and {\rm (vii)}$=${\rm (6$_k$)}. In order to check {\rm (v)}, observe that
\begin{multline*}
\|Y-X\|_{0,b\overline V}\leq 
\\
\|Y_k-X\circ\Pgot_0\circ \ldots \circ \Pgot_{k-1}\|+\|X\circ\Pgot_0\circ \ldots \circ \Pgot_{k-1}-X\|
\stackrel{\text{\rm (4$_k$)}}{<}\mu_k+\mu_k\stackrel{\text{\rm \eqref{eq:asf}}}{<}\epsilon.
\end{multline*}
Therefore, the Maximum Principle for harmonic maps ensures that $\|Y-X\|_{0,\overline V}<\epsilon$. The same argument gives property {\rm (vi)}. 

This concludes the proof of Lemma \ref{lem:compact2}.
\end{proof}


We are now ready to prove the main result in this section. Since the proof of Theorem \ref{th:compact} relies in a standard recursive application of Lemma \ref{lem:compact2}, we will omit some of the details. We refer to the proof of \cite[Theorem 5.1]{AL-Israel} for a careful exposition.

Before going into the proof we need the following notation. For any $k\in\n,$ any compact set $K\subset\Ncal$, and any continuous injective map $f\colon K\to\r^3$, denote by
\[
\Psi(K,f,k)=\frac{1}{2k^2}\cdot\inf\left\{\|f(P)-f(Q)\|\colon P,Q\in K,\; {\rm dist}_{\sigma_\Ncal^2}(P,Q)>\frac1{k}\right\}>0,
\]
where ${\rm dist}_{\sigma_\Ncal^2}(\cdot,\cdot)$ denotes the intrinsic distance in $\Ncal$ with respect to the conformal Riemannian metric $\sigma_\Ncal$; see Remark \ref{rem:inicio}.

\begin{proof}[Proof of Theorem \ref{th:compact}]
Let $\epsilon_1$ and $\asf$ be numbers with $0<\epsilon_1<\asf/2$.

Choose an $\I$-invariant bordered domain $M_1\in\Bscr_\I(\Ncal)$ satisfying the following properties:
\begin{enumerate}[{\rm (i)}]
\item $K\subset M_1 \Subset U$. 
\item If $b\overline M_1=\Gamma\cup \I(\Gamma)$ and $\Gamma\cap\I(\Gamma)=\emptyset$, then $X|_\Gamma\colon \Gamma \to\r^3$ is an embedding.
\item The Hausdorff distance $\dgot^H(X(b\overline M_1),X(b\overline U))<\epsilon_1$.
\end{enumerate}

Denote $X_1:=X|_{\overline M_1}\in\Bscr_\I(\overline M_1)$. A standard recursive application of Lemma \ref{lem:compact2} provides a sequence $\{\Theta_n=(M_n,X_n,\Tsf_n,\epsilon_n,\tau_n)\}_{n\in\n}$, where $M_n\in \Bscr_\I(\Ncal)$, $X_n\in \Mcal_\I(\overline M_n)$, $\Tsf_n$ is a metric tubular neighborhood of $b\overline M_n$ in $\overline M_n$,  $0<\epsilon_n<\asf/2^n$, and $\tau_n>0$, for all $n\in\n$, enjoying the following properties:
\begin{enumerate}[{\rm (1$_n$)}]
\item If $\overline{\Tsf}_n=\Omega\cup\I(\Omega)$ with $\Omega\cap \I(\Omega)=\emptyset$, then   $X_n|_{\Omega}\colon \Omega \to\r^3$ is an embedding.

\item $K\subset M_{n-1}\setminus\Tsf_{n-1}\Subset M_n\setminus\Tsf_n\Subset M_n\Subset M_{n-1}\Subset M$ for all $n\geq 2$.

\item $\|X_n-X_{n-1}\|_{1,\overline M_{n-1}\setminus \Tsf_{n-1}}<\epsilon_n$ and $\|X_n-X_{n-1}\|_{0,\overline M_n}<\epsilon_n$ for all $n\geq 2$.

\item ${\rm dist}_{X_n}(K,\overline \Tsf_n)>1/\epsilon_n$ for all $n\geq 2$.

\item The Hausdorff distance $\dgot^H(X_n(b\overline M_n),X_{n-1}(b\overline M_{n-1}))<\epsilon_n$ for all $n\geq 2$.

\item There exist $a_n:=\igot\cdot E((\tau_n)^{n+1})$ points $x_{n,1},\ldots, x_{n,a_n}$ in $X_n(b\overline M_n)\subset\r^3$ such that
\[
\dgot^H(X_n(\overline \Tsf_n),\{x_{n,1},\ldots, x_{n,a_n}\})<\big(1/{\tau_n}\big)^n,
\]
where $E(\cdot)$ means integer part and $2\igot$ is the number of ends of $\Ncal$.

\item $\displaystyle\epsilon_n<\min\left\{\epsilon_{n-1}, \varrho_{n-1},\frac1{n^2(\tau_{n-1})^n}\,,\,\Psi\big( \overline{\Tsf}_{n-1}\,,\,X_{n-1}|_{\overline{\Tsf}_{n-1}}\,,\, n \big)\right\},$ where 
\[
\varrho_{n-1}=2^{-n} \min \left\{  \min_{\overline M_{k-1}\setminus\Tsf_{k-1}} \Big\| \frac{\partial X_k}{\sigma_\Ncal}\Big\|\colon k=1,\ldots,n-1 \right\}>0,\; \;n\geq 2.
\]

\item $\tau_n\geq \tau_{n-1}+1\geq n$ for all $n\geq 2$.

\item $\|{\rm dist}_{\sigma_\Ncal^2}(\,\cdot\,,b\overline M_n)\|_{0,\overline \Tsf_n}<\epsilon_n$; see Remark \ref{rem:inicio}.

\item The flux map $\pgot_{X_n}$ of $X_n$ equals the one $\pgot_X$ of $X$.
\end{enumerate}
See \cite[Proof of Claim 5.2]{AL-Israel} for details on how to construct such a sequence; here we use Lemma \ref{lem:compact2} instead of \cite[Lemma 4.1]{AL-Israel}.

Set $\Ncal_n=M_n\setminus\overline\Tsf_n$ for all $n\in \n$ and define
\[
M:=\bigcup_{n\in \n} \Ncal_n. 
\]
From (2$_n$) and (9$_n$), $n\in\n$ we obtain that
$\overline M=\cap_{n\in\n} M_n$
and the inclusion map $\Hcal_1(M,\z)\hookrightarrow\Hcal_1(\Ncal,\z)$ is a homeomorphism.

In view of {\rm (2$_n$)}, {\rm (3$_n$)}, {\rm (5$_n$)}, and {\rm (7$_n$)}, $n\in\n,$ the sequence $\{X_n|_{\overline M}\}_{n\in\n}$ uniformly converges to an $\I$-invariant continuous map 
\[
Y\colon \overline M\to\r^3
\]
such that $\max\big\{\|Y-X\|_{1,K},\|Y-X\|_{0,\overline M},\dgot^{\rm H}\big(X(b\overline{U}), Y(\Fr M)\big)\big\}<\asf$. Moreover, {\rm (3$_n$)} and {\rm (7$_n$)}, $n\in\n$, ensure that $Y|_M$ is an $\I$-invariant conformal minimal immersion, which is complete by {\rm (4$_n$)}, $n\in\n$, and its flux map $\pgot_Y$ equals the one $\pgot_X$ of $X$ by {\rm (10$_n$)}, $n\in\n$.
Finally, properties {\rm (3$_n$)} and {\rm (7$_n$)}, $n\in\n$, ensure that $(Y|_{\Fr M})^{-1}(Y(P))=\{P,\I(P)\}$ for all $P\in \Fr M$, whereas {\rm (6$_n$)}, {\rm (7$_n$)}, and {\rm (9$_n$)}, $n\in\n$, guarantee that the  Hausdorff dimension of $Y(\Fr M)$ equals to $1$, provided that $\asf$ is taken small enough from the beginning. (See \cite[Proof of Theorem 5.1]{AL-Israel} for details.)

Therefore, the domain $M$ and the map $Y$ satisfy the conclusion of Theorem \ref{th:compact}, provided that $\asf$ is chosen sufficiently small.
\end{proof}


\section{Complete non-orientable minimal surfaces properly projecting\\ into planar convex domains}\label{sec:theorem2}

Recall that we have fixed $\Ncal,$ $\I$, and $\pi$ as in Def.\ \ref{def:non} (see Remark \ref{rem:inicio}); in particular $\Ncal$ is an open Riemann surface possibly with infinite topology.

In this section we prove Theorem \ref{th:intro-proper} in the introduction.  We actually prove the following more precise result.

\begin{theorem}\label{th:proper}
Let $D\subset\r^2$ be a convex domain (possibly neither bounded nor smooth), let $U\Subset\Ncal$ be a Runge connected $\I$-invariant bordered domain, and let $X=(X_1,X_2,X_3)\in\Mcal_\I(\overline{U})$ such that
\begin{equation}\label{eq:thproper1}
(X_1,X_2)(\overline{U})\subset D.
\end{equation}
Let also $\pgot\colon\Hcal_1(\Ncal,\z)\to\r^3$ be a group morphism satisfying 
\begin{equation}\label{eq:thproper2}
\pgot|_{\Hcal_1(U,\z)}=\pgot_X\quad \text{and}\quad  \pgot(\I_*(\gamma))=-\pgot(\gamma)\quad \forall \gamma\in\Hcal_1(\Ncal,\z).
\end{equation}
(See \eqref{eq:flux-non}.)

Then for any $\epsilon>0$ there exist a Runge $\I$-invariant domain $M\subset\Ncal$ and $Y=(Y_1,Y_2,Y_3)\in\Mcal_\I(M)$ enjoying the following properties:
\begin{itemize}
\item $U\Subset M$ and $M$ is homeomorphic to $\Ncal$.
\item $\|Y-X\|_{1,\overline{U}}<\epsilon$.
\item $Y\colon M\to\r^3$ is complete.
\item $(Y_1,Y_2)(M)\subset D$ and $(Y_1,Y_2)\colon M\to D$ is a proper map.
\item The flux map $\pgot_Y$ of $Y$ equals $\pgot$.
\end{itemize}
\end{theorem}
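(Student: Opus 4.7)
The plan is to prove Theorem \ref{th:proper} by a recursive approximation scheme combining the Runge-Mergelyan theorem for non-orientable minimal surfaces (Theorem \ref{th:Mergelyan}) with Minkowski's exhaustion of convex domains (Theorem \ref{th:mink}), following the orientable version of \cite{AL-CY} but preserving $\I$-invariance at every step. For the setup I would use Theorem \ref{th:mink} to write $\overline{D}=\bigcup_{n\in\n}\overline{D^n}$ as an increasing union of bounded strictly convex analytic domains $D^n\Subset D^{n+1}\Subset D$, with (after relabeling) $(X_1,X_2)(\overline U)\subset D^1$. In parallel I would exhaust $\Ncal$ by an increasing sequence $U=U_1\Subset U_2\Subset\cdots$ of Runge $\I$-invariant bordered domains with $\bigcup_n U_n=\Ncal$, obtained by lifting an exhaustion of the non-orientable quotient $\Ncal/\I$ by relatively compact smoothly bounded subsurfaces; a free deformation makes $\Ncal$ a topological tubular neighborhood of each $U_n$.

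The inductive step is an $\I$-invariant hybrid completeness/properness lemma: given Runge $\I$-invariant bordered domains $V_1\Subset V_2\Subset\Ncal$, $Z\in\Mcal_\I(\overline{V_2})$ with $(Z_1,Z_2)(\overline{V_2})\subset D'$, strictly convex bounded analytic domains $D'\Subset D''\subset\r^2$, a group morphism $\pgot\colon\Hcal_1(\Ncal,\z)\to\r^3$ extending $\pgot_Z$ and satisfying $\pgot\circ\I_*=-\pgot$, and constants $\epsilon,\tau>0$, produce $W\in\Mcal_\I(\overline{V_2})$ with $\|W-Z\|_{1,\overline{V_1}}<\epsilon$, $(W_1,W_2)(\overline{V_2})\subset D''$, $(W_1,W_2)(\overline{V_2\setminus V_1})\subset D''\setminus\overline{D'}$, $\dist_W(\overline{V_1},b\overline{V_2})>\tau$, and $\pgot_W=\pgot|_{\Hcal_1(V_2,\z)}$. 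To prove it I would attach to $\overline{V_1}$ an $\I$-invariant finite system of pairwise disjoint analytical arcs $r_{i,j}\subset\overline{V_2\setminus V_1}$ (with $\I$-partners $\I(r_{i,j})$) so that $S:=\overline{V_1}\cup\bigcup_{i,j}(r_{i,j}\cup\I(r_{i,j}))$ is $\I$-admissible (Def.~\ref{def:admi}), and on each representative arc extend $Z$ to a generalized minimal immersion $\widehat Z\in\Mcal_{\ggot,\I}(S)$ whose first two coordinates wind many times around the outside of $D'$ while creeping outward in directions tangent to $\partial D''$. The strict convexity and analyticity of $D''$ provide uniform control of the support lines, the values on $\I$-partner arcs are forced by $\I$-invariance, and Theorem \ref{th:Mergelyan}(I) applied to a marked version $\widehat Z_\varpi$ with morphism $\pgot$ yields an $\I$-invariant conformal minimal immersion on $\Ncal$ approximating $\widehat Z$ on $S$ in the $\Ccal^1$ topology; restriction to a suitable bordered subdomain of $V_2$ together with the maximum principle against the convex barrier $\partial D''$ then produces $W$.

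I would iterate the lemma with parameters $(V_1,V_2,D',D'',\tau,\epsilon)=(U_n,U_{n+1},D^n,D^{n+1},1,\epsilon_n)$ starting from $X_1:=X$, where $\{\epsilon_n\}\downarrow 0$ is summable and small enough to dominate every control term on $\overline{U_n}$. This produces $X_{n+1}\in\Mcal_\I(\overline{U_{n+1}})$ with $(X_{n+1,1},X_{n+1,2})(\overline{U_{n+1}\setminus U_n})\subset D^{n+1}\setminus\overline{D^n}$ and $\dist_{X_{n+1}}(\overline{U_n},b\overline{U_{n+1}})>1$, so the $\Ccal^1$-limit $Y:=\lim_n X_n$ is a well-defined $\I$-invariant conformal minimal immersion on $M:=\Ncal$ with $\|Y-X\|_{1,\overline U}<\epsilon$; completeness follows from the divergent telescoping sum, and properness of $(Y_1,Y_2)\colon M\to D$ follows since any compact $K\subset D^n$ satisfies $(Y_1,Y_2)^{-1}(K)\subset\overline{U_{n+1}}$ up to arbitrarily small perturbation. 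The hardest part is the $\I$-invariant deformation lemma itself: in the orientable case each boundary arc is pushed independently, but here the generalized Weierstrass data on each $r_{i,j}$ and its $\I$-partner must satisfy $\I^*(\partial\widehat Z_\varpi)=\overline{\partial\widehat Z_\varpi}$ together with the flux antisymmetry $\pgot\circ\I_*=-\pgot$. Since $\I$ is fixed-point-free and antiholomorphic, working on a fundamental system of arcs (one per $\I$-orbit) and extending by $\I$ is self-consistent, and Theorem \ref{th:Mergelyan}(I) handles approximation and flux realization simultaneously, so the essentially new technical work is producing coherent generalized Weierstrass data on $\I$-orbit representatives.
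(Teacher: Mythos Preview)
Your overall strategy---recursive approximation via Theorem~\ref{th:Mergelyan} together with a convex exhaustion from Theorem~\ref{th:mink}---is the paper's, but there are genuine gaps in the execution. The most serious is in the iteration: your lemma requires input $Z\in\Mcal_\I(\overline{V_2})$, and you apply it with $(V_1,V_2)=(U_n,U_{n+1})$ and $Z=X_n$; but $X_n$ has only been constructed in $\Mcal_\I(\overline{U_n})$, so the hypothesis is not met. What is missing is exactly the extension of $X_n$ to a bordered domain carrying the topology of $U_{n+1}$, while keeping the first two coordinates in the appropriate convex shell \emph{and} realising the prescribed values of $\pgot$ on the new homology classes. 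Your assertion that ``a free deformation makes $\Ncal$ a topological tubular neighborhood of each $U_n$'' is false when $\Ncal$ has infinite topology, which the theorem explicitly allows; the paper handles this by choosing the exhaustion with $\chi(U_j\setminus\overline{U_{j-1}})\in\{0,-2\}$ and, in the $-2$ case, first attaching an $\I$-symmetric pair of arcs $\gamma\cup\I(\gamma)$ to the previous bordered domain, extending along them as a generalised immersion landing in the shell with the correct period along the new cycle, and only then invoking Theorem~\ref{th:Mergelyan}(I) before the main deformation lemma (Lemma~\ref{lem:proper}).

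There is also a formulation error in your lemma: you demand $(W_1,W_2)(\overline{V_2\setminus V_1})\subset D''\setminus\overline{D'}$ together with $(Z_1,Z_2)(\overline{V_2})\subset D'$ and $\|W-Z\|_{1,\overline{V_1}}<\epsilon$. Since $b\overline{V_1}\subset\overline{V_2\setminus V_1}$ and $(W_1,W_2)(b\overline{V_1})$ is $\epsilon$-close to $(Z_1,Z_2)(b\overline{V_1})\subset D'$, the conclusion contradicts the hypotheses. The paper's Lemma~\ref{lem:proper} avoids this by working with four nested convex domains $\Lcal\Subset\Dcal\subset\Bgot\Subset\Agot$, assuming the input boundary already lies in the shell $\Dcal\setminus\overline\Lcal$, and asking only that the new annulus map into $\Agot\setminus\overline\Lcal$. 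Finally, note that the paper does \emph{not} conclude $M=\Ncal$: its completeness mechanism (Lemma~\ref{lem:compact2}, invoked inside Lemma~\ref{lem:proper}) shrinks the domain, so the output bordered domains $M_j$ satisfy $M_j\Subset U_j$ and one obtains only $M=\bigcup_jM_j$ homeomorphic to $\Ncal$. Your ``winding on arcs'' sketch does not explain why, after Mergelyan approximation, \emph{every} curve crossing the fixed annulus $V_2\setminus V_1$ (not only those near the attached arcs) acquires length exceeding $\tau$; the paper separates the completeness and push-out mechanisms precisely to obtain this control.
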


If $D=\r^2$, the above theorem is already known; futhermore, in this particular case one can choose $M=\Ncal$ (see \cite{AL-RMnon}).


The following result contains most of the technical arguments in the proof of Theorem \ref{th:proper}. Lemma \ref{lem:compact2} will play an important role in its proof.

\begin{lemma}\label{lem:proper}
Let $\Lcal\Subset\Dcal\subset\Bgot\Subset\Agot\subset\r^2$ be bounded smooth convex domains; i.e. with smooth frontier. Let $U\Subset \widehat U\Subset\Ncal$ be Runge connected $\I$-invariant bordered domains, and let $X=(X_1,X_2,X_3)\in\Mcal_\I(\overline{U})$ such that
\begin{equation}\label{eq:lemma-proper}
(X_1,X_2)(b\overline{U})\subset\Dcal\setminus \overline \Lcal.
\end{equation}

Then, for any $\epsilon>0$ there exist a Runge $\I$-invariant bordered domain $V\Subset\Ncal$ and $Y=(Y_1,Y_2,Y_3)\in\Mcal_\I(\overline{V})$ satisfying the following properties:
\begin{enumerate}[\rm (i)]
\item $U\Subset V\Subset \widehat U$ and $V\setminus \overline U$ consists of a finite collection of open annuli.
\item $\|Y-X\|_{1,\overline U}<\epsilon$.
\item $(Y_1,Y_2)(b\overline V)\subset \Agot\setminus\overline \Bgot$.
\item $(Y_1,Y_2)(\overline V\setminus U)\subset \Agot\setminus \overline\Lcal$.
\item ${\rm dist}_Y(\overline U,b\overline V)>1/\epsilon$.
\item The flux map $\pgot_Y$ of $Y$ equals the one $\pgot_X$ of $X$.
\end{enumerate}
\end{lemma}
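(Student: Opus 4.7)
The plan is a two-stage deformation in the spirit of \cite{AL-CY}, adapted to respect the antiholomorphic involution $\I$: first push the boundary projection $(X_1,X_2)$ out of $\overline\Bgot$ by a Runge--Mergelyan step, then inflate the intrinsic distance via Lemma \ref{lem:compact2}. Every construction must be $\I$-symmetric, and the tolerance in the second step must be tight enough that the geometric gains of the first step survive.

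For the setup, extend $X$ conformally and $\I$-invariantly to an $\I$-invariant bordered domain $U_0$ with $U\Subset U_0\Subset \widehat U$ and $(X_1,X_2)(\overline{U_0}\setminus U)\subset \Dcal\setminus\overline\Lcal$ (possible by continuity and \eqref{eq:lemma-proper}), and fix a tubular $\I$-invariant bordered neighborhood $V_0$ of $\overline U$ with $U\Subset V_0\Subset U_0$, so that $V_0\setminus\overline U$ is a finite union of open annuli (this secures (i)). Since $\Bgot\Subset\Agot$ with $\Bgot$ smooth and convex, compactness of $b\Bgot$ produces finitely many boundary points $p_k\in b\Bgot$ with outer unit normals $\nu_k$ and radii $\rho_k>0$ whose balls $B(p_k,\rho_k)$ cover $b\Bgot$ and satisfy
\[
q+t\,\nu_k\in\Agot\setminus\overline\Bgot \quad \forall\, q\in B(p_k,\rho_k),\ t\ge\rho_k.
\]
Subdivide each component of $b\overline U$ into an $\I$-symmetric cyclic chain of short arcs $\alpha_{i,j}$, $j\in\z_\jgot$, each so small that $(X_1,X_2)(\alpha_{i,j})\subset B(p_{k(i,j)},\rho_{k(i,j)})$ for some label $k(i,j)$.

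For the pushing phase, attach an $\I$-invariant skeleton of analytic arcs $r_{i,j}\cup\I(r_{i,j})$ emanating from the subdivision endpoints $Q_{i,j}$ into $V_0\setminus U$, forming an $\I$-admissible set $S\subset \overline{V_0}$ in the sense of Def.\ \ref{def:admi}. Build $\widehat X_\varpi\in\Mcal_{\ggot,\I}^*(S)$ by setting $\widehat X_\varpi|_{\overline U}=X_{\varpi_X}$ and, on each $r_{i,j}$, prescribing $\widehat X$ so that its $(x_1,x_2)$-projection displaces $(X_1,X_2)(Q_{i,j})$ by a vector of magnitude at least $\rho_{k(i,j)}$ along $\nu_{k(i,j)}$ while remaining inside $\Agot\setminus\overline\Lcal$; extend $\varpi$ smoothly and positively oriented, and define everything on $\I(r_{i,j})$ by $\I$-invariance so that $\I^*\varpi=-\varpi$. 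Any $\I$-antisymmetric extension of $\pgot_{\widehat X_\varpi}|_{\Hcal_1(S,\z)}$ to a group homomorphism $\pgot\colon\Hcal_1(\Ncal,\z)\to\r^3$ feeds into Theorem \ref{th:Mergelyan}(I), yielding $\widetilde X\in\Mcal_\I(\Ncal)$ that $\Ccal^1$-approximates $\widehat X_\varpi$ on $S$ with $\pgot_{\widetilde X}=\pgot$. A sharp enough approximation gives $\widetilde X|_{\overline{V_0}}\in\Mcal_\I(\overline{V_0})$ with $\|\widetilde X-X\|_{1,\overline U}<\epsilon/2$, $(\widetilde X_1,\widetilde X_2)(b\overline{V_0})\subset\Agot\setminus\overline\Bgot$, and $(\widetilde X_1,\widetilde X_2)(\overline{V_0}\setminus U)\subset\Agot\setminus\overline\Lcal$.

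Finally, apply Lemma \ref{lem:compact2} to $\widetilde X$ on $\overline{V_0}$ with compact set $\overline U$ and tolerance $\delta$ chosen smaller than $\epsilon/2$, smaller than the Euclidean margins between $(\widetilde X_1,\widetilde X_2)(b\overline{V_0})$ and $\overline\Bgot$, and between $(\widetilde X_1,\widetilde X_2)(\overline{V_0}\setminus U)$ and $\overline\Lcal$, and less than $\epsilon$. The resulting $V\in\Bscr_\I(\Ncal)$ with $\overline U\subset V\Subset V_0$ and $Y\in\Mcal_\I(\overline V)$ then satisfies $\dist_Y(\overline U,b\overline V)>1/\delta>1/\epsilon$, $\|Y-X\|_{1,\overline U}\le\|Y-\widetilde X\|_{1,\overline U}+\|\widetilde X-X\|_{1,\overline U}<\epsilon$, and, using $\|Y-\widetilde X\|_{0,\overline V}<\delta$ together with $\dgot^{\rm H}(Y(b\overline V),\widetilde X(b\overline{V_0}))<\delta$, conditions (iii) and (iv); condition (i) is immediate from $V\Subset V_0$, and (vi) follows from the flux preservation in both Theorem \ref{th:Mergelyan} and Lemma \ref{lem:compact2}. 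The main obstacle is the pushing phase: arranging simultaneously that $S$ is $\I$-admissible, that the marked immersion $\widehat X_\varpi$ satisfies the involution-compatibility of Def.\ \ref{def:marked} with a positively oriented mark, and that the prescribed directions $\nu_{k(i,j)}$ actually force each piece of $b\overline{V_0}$ out of $\overline\Bgot$ without re-entering $\overline\Lcal$, all compatibly with the flux constraint \eqref{eq:flux-non} inherited from $\pgot_X$.
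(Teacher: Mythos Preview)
Your two-phase plan is sound in outline, and interestingly you invert the paper's order: you push the boundary projection first and apply Lemma~\ref{lem:compact2} second, whereas the paper applies Lemma~\ref{lem:compact2} first (obtaining $M$ and $F$ with $\dist_F(\overline U,b\overline M)>1/\epsilon_0$) and then pushes. Your order would also work, since Lemma~\ref{lem:compact2}(v) and (vi) give both $\Ccal^0$-closeness on $\overline V$ and Hausdorff-closeness of the boundary images, which together preserve the outcomes of a successful pushing phase for small enough tolerance. So the order is not the issue.

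The genuine gap is the pushing phase itself. Theorem~\ref{th:Mergelyan}(I) applied to $\widehat X_\varpi$ on $S=\overline U\cup\bigcup(r_{i,j}\cup\I(r_{i,j}))$ controls $\widetilde X$ \emph{only on $S$}; it says nothing about $\widetilde X$ on $b\overline{V_0}$, nor on the discs of $\overline{V_0}\setminus U$ between consecutive arcs. Your assertion that a sharp approximation forces $(\widetilde X_1,\widetilde X_2)(b\overline{V_0})\subset\Agot\setminus\overline\Bgot$ and $(\widetilde X_1,\widetilde X_2)(\overline{V_0}\setminus U)\subset\Agot\setminus\overline\Lcal$ does not follow. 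The paper resolves this with a much more elaborate construction: after a first Mergelyan(I) step it chooses a bordered $W$ whose boundary passes through the far endpoints $P_{i,j}$, and then runs an \emph{iteration} $H^0,\ldots,H^{\igot\jgot}$ using Theorem~\ref{th:Mergelyan}(II) in rotated frames with third axis $e^3_{\eta(k)}=(u_{\eta(k)},0)$. Each step pushes one boundary arc $\Gamma_{\eta(k)}\subset b\overline W$ out of $\overline\Agot$ while \emph{exactly preserving} the component $\langle (H^k_1,H^k_2),u_{\eta(k)}\rangle$, which is what keeps the whole disc $\Omega_{\eta(k)}$ on the correct side of the supporting line of $\Lcal$ (so that (iv) survives). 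Only after all $\igot\jgot$ steps does one select $V$ between $M$ and $W$ with boundary in $\Agot\setminus\overline\Bgot$. Your single Mergelyan(I) application collapses this mechanism; without the coordinate-preserving iteration you cannot secure (iii) or (iv), and your own closing paragraph correctly identifies this as the unresolved obstacle.
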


\begin{proof}

Let $\epsilon_0>0$.

Since $X\in\Mcal_\I(\overline U)$, there exists an $\I$-invariant bordered domain $M_0\Subset\Ncal$ such that $X\in\Mcal_\I(\overline M_0)$ and the following properties hold:
\begin{enumerate}[\rm ({A}1)]
\item $U\Subset M_0\Subset\widehat U$.
\item $M_0\setminus\overline  U$ consists of a finite collection of open annuli.
\item $(X_1,X_2)(\overline M_0\setminus U)\subset\Dcal\setminus \overline \Lcal$; see \eqref{eq:lemma-proper}.
\end{enumerate}

We now use Lemma \ref{lem:compact2} to get an $\I$-invariant bordered domain $M\Subset\Ncal$ and an $\I$-invariant conformal minimal immersion $F=(F_1,F_2,F_3)\in\Mcal_\I(\overline M)$, such that:
\begin{enumerate}[\rm ({B}1)]
\item $U\Subset M\Subset M_0\Subset\widehat U$; see {\rm (A1)}.
\item $M\setminus \overline U$ consists of a finite collection of open annuli; see {\rm (A2)}.
\item $\|F-X\|_{1,\overline U}<\epsilon_0$.
\item $(F_1,F_2)(\overline M\setminus U)\subset\Dcal\setminus \overline \Lcal$; see {\rm (A3)} and Lemma \ref{lem:compact2} {\rm (v)}.
\item ${\rm dist}_F(\overline U,b\overline M)>1/\epsilon_0$.
\item The flux map $\pgot_F$ of $F$ equals the one $\pgot_X$ of $X$.
\end{enumerate}

Write
\[
b\overline M=\cup_{i=1}^\igot (\alpha_i\cup\I(\alpha_i));
\]
where $\{\alpha_i\}_{i=1}^\igot$ are pairwise disjoint smooth Jordan curves with $\alpha_i\cap \I(\alpha_i)=\emptyset$. Denote $\alpha=\cup_{i=1}^\igot \alpha_i$. It follows that $b\overline M=\alpha\cup\I(\alpha)$ and $\alpha\cap\I(\alpha)=\emptyset$.

Since $\Lcal$ is convex, {\rm (B4)} ensures that for any $P\in \alpha$ there exist a line $l_P$ in $\r^2$  and an open neighborhood $O_P$ of $P$ in $\alpha$ such that
\begin{equation}\label{eq:2OP}
\big((F_1,F_2)(Q)+l_P\big)\cap \overline\Lcal=\emptyset\quad \forall Q\in O_P.
\end{equation}

Since $\alpha$ is compact, then there exist $\jgot\in\n,$ $\jgot\geq 3,$ and a family of compact Jordan arcs $\{\alpha_{i,j}\colon (i,j)\in I=\{1,\dots,\igot\}\times\z_\jgot\}$ meeting the following requirements:
\begin{enumerate}[{\rm ({C}1)}]
\item $\cup_{j\in\z_\jgot} \alpha_{i,j}=\alpha_i$ for all $i\in\{1,\dots,\igot\}$.
\item $\alpha_{i,j}$ and $\alpha_{i,j+1}$ have a common endpoint $Q_{i,j}$ and are otherwise disjoint for all $(i,j)\in I$.
\item $\alpha_{i,j}\cap \alpha_{i,k}=\emptyset$ for all $(i,j)\in I$ and $k\in \z_\jgot\setminus\{j,j+1\}$.
\item $\alpha_{i,j}\subset O_{R_{i,j}}$ for a point $R_{i,j}\in\alpha$, for all $(i,j)\in I$.
\end{enumerate}

For any $(i,j)\in I$ label $l_{i,j}:=l_{R_{i,j}}$, $O_{i,j}:=O_{R_{i,j}}$, and denote by $u_{i,j}$ the unitary vector in $\r^2$ orthogonal to $l_{i,j}$ and pointing to the connected component of $\r^2\setminus ((F_1,F_2)(R_{i,j})+l_{i,j})$ disjoint from $\overline \Lcal$. Set 
\[
e_{i,j}^3:=(u_{i,j},0)\in\s^2
\]
and denote by $\pi_{i,j}\colon \r^2\to {\rm span}\{u_{i,j}\}\subset \r^2$ the orthogonal projection.

For any $(i,j)\in I$ choose $\{e_{i,j}^1,e_{i,j}^2\}\subset\r^3$ such that $\{e_{i,j}^1,e_{i,j}^2,e_{i,j}^3\}$ is an orthonormal basis of $\r^3$, and denote
\begin{equation}\label{eq:2Bij}
B_{i,j}=\Big({e_{i,j}^1}^T\,,\,{e_{i,j}^2}^T\,,\,{e_{i,j}^3}^T\Big)^{-1}.
\end{equation}

Let $\{r_{i,j}\colon (i,j)\in I\}$ be a family of pairwise disjoint analytical compact Jordan arcs in $M_0\setminus M$ meeting the following requirements:
\begin{enumerate}[\rm ({D}1)]
\item $r_{i,j}\subset O_{i,j}\cap O_{i,j+1}$ for all $(i,j)\in I$.
\item $r_{i,j}$ has initial point $Q_{i,j}$ and is otherwise disjoint from $b\overline M$  for all $(i,j)\in I$. Denote by $P_{i,j}$ the other endpoint of $r_{i,j}$ for all $(i,j)\in I$.
\item The set 
\[
S:=\overline M \cup\big(\cup_{(i,j)\in I} (r_{i,j}\cup\I(r_{i,j}))\big)\subset M_0\subset \Ncal
\]
is $\I$-admissible in the sense of Def.\ \ref{def:admi}. 
\end{enumerate} 
See Fig.\ \ref{fig:proper}.

We first deform $F$ over the arcs $r_{i,j}$ and $\I(r_{i,j})$, $(i,j)\in I$.

Extend $F|_{\overline M}$ to an $\I$-invariant generalized minimal immersion  $\widehat F=(\widehat F_1,\widehat F_2,\widehat F_3)\in \Mcal_{\ggot,\I}(S)$ such that (see Def.\ \ref{def:gene-min})
\begin{enumerate}[\rm ({E}1)]
\item $\pi_{i,a}\big((\widehat F_1,\widehat F_2)(r_{i,j})\big)\cap \pi_{i,a}(\overline \Lcal)=\emptyset$ for all $(i,j)\in I$ and $a\in\{j,j+1\}$.
\item $\pi_{i,a}\big((\widehat F_1,\widehat F_2)(P_{i,j})\big)\notin \pi_{i,a}(\overline \Agot)=\emptyset$ for all $(i,j)\in I$ and $a\in\{j,j+1\}$.
\end{enumerate}
For instance, one can take $(\widehat F_1,\widehat F_2)(r_{i,j})$ to be $\Ccal^0$ close to a long enough straight segment in $\r^2$ with initial point $(F_1,F_2)(Q_{i,j})$ and directed by a vector $\widehat u_{i,j}\in \r^2$ with $\langle\widehat u_{i,j},u_{i,a}\rangle>0$ for $a=j,j+1$. Such a vector $\widehat u_{i,j}$ exists since $u_{i,j}\neq -u_{i,j+1}$; take into account that $Q_{i,j}\in O_{i,j}\cap O_{i,j+1}$ by {\rm (C4)}.

Applying Theorem \ref{th:Mergelyan} to any marked immersion $\widehat F_\varpi=(\widehat F,\varpi)\in\Mcal_{\ggot,\I}^*(S)$ and $\pgot=\pgot_{\widehat F_\varpi}=\pgot_X\colon\Hcal_1(\Ncal,\z)\to\r^3$ (see Def.\ \ref{def:marked} and {\rm (B6)}), one obtains an $\I$-invariant conformal minimal immersion $H=(H_1,H_2,H_3)\in\Mcal_\I(\overline M_0)$ satisfying:
\begin{enumerate}[\rm ({F}1)]
\item $\|H-F\|_{1,\overline M}<\epsilon_0$.

\item $\|H-\widehat F\|_{0,S}<\epsilon_0$.

\item $(H_1,H_2)(\overline M\setminus U)\subset\Dcal\setminus \overline \Lcal$; see {\rm (B4)}.

\item ${\rm dist}_H(\overline U,b\overline M)>1/\epsilon_0$; see {\rm (B5)}.

\item $\pi_{i,j}\big((H_1,H_2)(r_{i,j-1}\cup\alpha_{i,j}\cup r_{i,j})\big)\cap \pi_{i,j}(\overline \Lcal)=\emptyset$ for all $(i,j)\in I$; see {\rm (C4)}, \eqref{eq:2OP}, and {\rm (E1)}.

\item $\pi_{i,j}\big((H_1,H_2)(\{P_{i,j-1},P_{i,j}\})\big)\cap \pi_{i,j}(\overline \Agot)=\emptyset$ for all $(i,j)\in I$; see {\rm (E2)}.

\item The flux map $\pgot_H$ of $H$ equals the one $\pgot_X$ of $X$.
\end{enumerate}

By continuity of $H$, there exists an $\I$-invariant bordered domain $W\subset \Ncal$ satisfying the following properties:
\begin{enumerate}[\rm ({G}1)]
\item $M\Subset W\Subset M_0$.

\item $S\subset \overline W $ and $S\cap b\overline W =\cup_{(i,j)\in I} \{P_{i,j},\I(P_{i,j})\}$.

\item $W \setminus\overline M$ consists of a finite collection of open annuli.

\item $\pi_{i,j}\big((H_1,H_2)(\Omega_{i,j})\big)\cap \pi_{i,j}(\overline \Lcal)=\emptyset$, where $\Omega_{i,j}$ is the closure of the connected component of $W\setminus S$ bounded by $r_{i,j-1}$, $\alpha_{i,j}$, $r_{i,j}$, and the piece $\beta_{i,j}$ of $b\overline W$ which connects $P_{i,j-1}$ and $P_{i,j}$ and is otherwise disjoint from $S$; see {\rm (F5)}.

\item $\pi_{i,j}\big((H_1,H_2)(\overline{\beta_{i,j}\setminus \Gamma_{i,j}})\big)\cap\pi_{i,j}(\overline \Agot)=\emptyset$, where $\Gamma_{i,j}$ is a compact sub-arc of $\beta_{i,j}\setminus\{P_{i,j-1},P_{i,j}\}$; see {\rm (F6)}.

\item $(\partial H B_{i,j}^T)_3$ vanishes nowhere on $b\overline W$ for all $(i,j)\in I$; here $(\cdot)_3$ denotes third coordinate in $\r^3$ and  $B_{i,j}$ is given by \eqref{eq:2Bij}.
\end{enumerate}
See Fig.\ \ref{fig:proper}.
\begin{figure}[ht]
    \begin{center}
    \scalebox{0.32}{\includegraphics{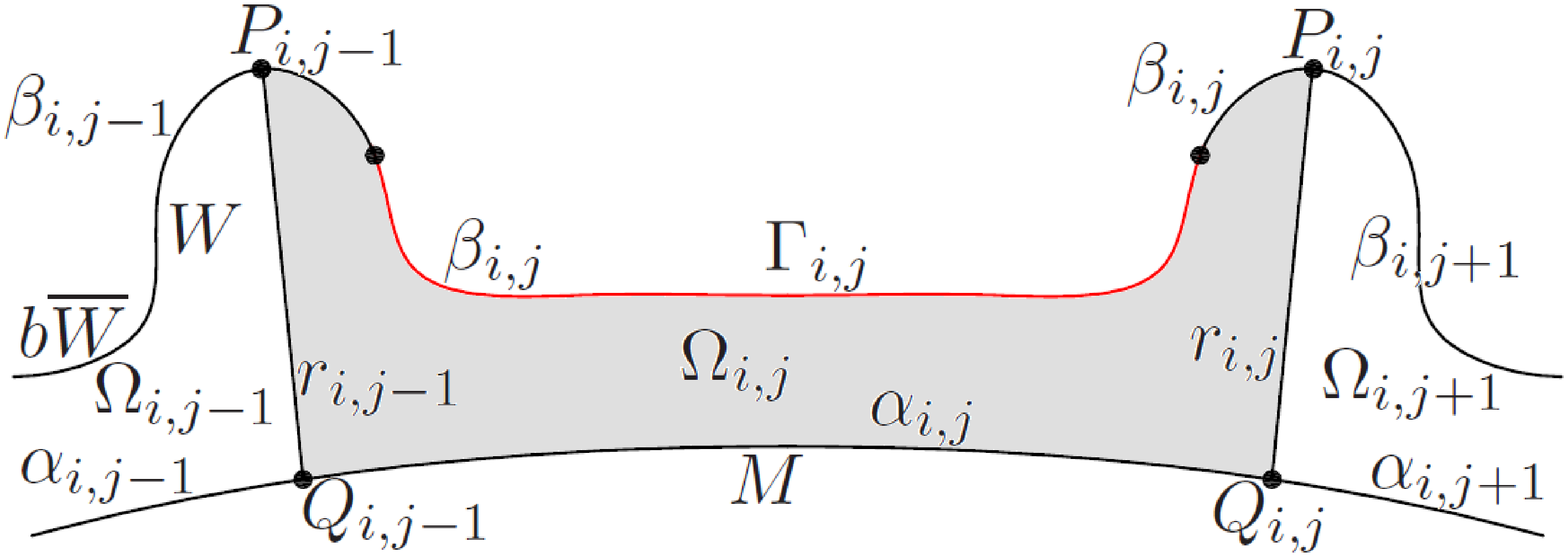}}
        \end{center}
\caption{$\overline W\setminus M$}
\label{fig:proper}
\end{figure}

We now deform $H$ over the disc $\Omega_{i,j}$ and $\I(\Omega_{i,j})$ having particular control over the arcs $\Gamma_{i,j}$ and $\I(\Gamma_{i,j})$, $(i,j)\in I$. 

Let $\eta\colon \{1,\ldots,\igot\jgot\}\to\{1,\ldots,\igot\}\times \z_\jgot$ be the bijection $\eta(k)=(E(\frac{k-1}{\jgot})+1,k-1),$ where $E(\cdot)$ means integer part. 

We now recursively construct a sequence $\{H^0=H, H^1,\ldots, H^{\igot\jgot}\}\subset \Mcal_\I(\overline M_0)$, $H^k=(H^k_1,H^k_2,H^k_3)$ for all $k\in\{0,1,\ldots,\igot\jgot\}$, satisfying the following properties for all $k\in \{0,1,\ldots,\igot\jgot\}$:
\begin{enumerate}[\rm (1$_{k}$)]
\item $\|H^k-F\|_{1,\overline M}<\epsilon_0$.


\item $\|H^k-H^{k-1}\|_{1,\overline{W\setminus (\Omega_{\eta(k)}\cup \I(\Omega_{\eta(k)}))}} <\epsilon_0/\igot\jgot$, $k\geq 1$.

\item $\langle H^k-H^{k-1},e_{\eta(k)}^3 \rangle=0$, $k\geq 1$.

\item ${\rm dist}_{H^k}(\overline U,b\overline M)>1/\epsilon_0$.

\item $\pi_{\eta(a)}\big((H^k_1,H^k_2)(\Omega_{\eta(a)})\big)\cap \pi_{\eta(a)}(\overline \Lcal)=\emptyset$ for all $a\in\{1,\ldots,\igot\jgot\}$.

\item $\pi_{\eta(a)}\big((H^k_1,H^k_2)(\overline{\beta_{\eta(a)}\setminus \Gamma_{\eta(a)}})\big)\cap\pi_{\eta(a)}(\overline \Agot)=\emptyset$ for all $a\in\{1,\ldots,\igot\jgot\}$.

\item $(H^k_1,H^k_2)(\Gamma_{\eta(a)})\cap\overline\Agot=\emptyset$ for all $a\in\{1,\ldots,k\}$, $k\geq 1$.

\item $(\partial (H^k B_{i,j}^T))_3$ vanishes nowhere on $b\overline W$ for all $(i,j)\in I$; see \eqref{eq:2Bij}.

\item The flux map $\pgot_{F_k}$ of $F_k$ equals the one $\pgot_X$ of $X$.

\item $(H^k_1,H^k_2)(\overline M\setminus U)\subset\Dcal\setminus \overline \Lcal$.
\end{enumerate}
Indeed, observe that {\rm (1$_0$)}={\rm (F1)}, {\rm (4$_0$)}={\rm (F4)}, {\rm (5$_0$)}={\rm (G4)}, {\rm (6$_0$)}={\rm (G5)}, {\rm (8$_0$)}={\rm (G6)}, {\rm (9$_0$)}={\rm (F7)}, and {\rm (10$_0$)}={\rm (F3)}, whereas {\rm (2$_0$)}, {\rm (3$_0$)}, and {\rm (7$_0$)} make no sense. Reason by induction and assume that we already have $H^0,\ldots, H^{k-1}$, for some $k\in\{1,\ldots,\igot\jgot\}$, satisfying the corresponding properties. Let us construct $H^k$.

Denote $G=(G_1,G_2,G_3):=H^{k-1} B_{\eta(k)}^T\in \Mcal_\I(\overline M_0)$, where $B_{\eta(k)}$ is the orthogonal matrix \eqref{eq:2Bij}. It follows that $G\in\Mcal_\I(\overline M_0)$ and 
\[
G_3=\langle H^{k-1}, e_{\eta(k)}^3\rangle.
\]
Denote
\[
S_k:= \overline M \cup \Big(\Gamma_{\eta(k)}\cup \I(\Gamma_{\eta(k)})\Big)\cup \Big(\bigcup_{a\neq k} \Omega_{\eta(a)}\cup \I(\Omega_{\eta(a)})\Big)
\]
and observe that $S_k$ is $\I$-admissible (Def.\ \ref{def:admi}). Extend $G|_{S_k\setminus (\Gamma_{\eta(k)}\cup \I(\Gamma_{\eta(k)}))}$ to an $\I$-invariant generalized minimal immersion $\widehat G=(\widehat G_1,\widehat G_2,\widehat G_3)\in\Mcal_{\ggot,\I}(S_k)$ (cf.\ Def. \ref{def:gene-min}) such that
$
\widehat G_3= G_3|_{S_k}
$
and
\[
\pi_{\eta(k)}\left(\Big( \big(\widehat G (B_{\eta(k)}^T)^{-1}\big)_1,\big(\widehat G (B_{\eta(k)}^T)^{-1}\big)_2\Big)(\Gamma_{\eta(k)})\right)\cap \overline\Agot=\emptyset,
\]
where $(\cdot)_j$ means $j$-th coordinate in $\r^3$.
For instance, choose $\widehat G|_{\Gamma_k}=G|_{\Gamma_k}+(x_0,y_0,0)$ where $(x_0,y_0)\in\r^2$ is a constant with large enough norm.

In view of {\rm (8$_{k-1}$)}, one can apply Theorem \ref{th:Mergelyan} {\rm (II)} to any marked immersion $\widehat G_\varpi=(\widehat G,\varpi)\in\Mcal_{\ggot,\I}^*(S_k)$ such that $(\partial \widehat G_\varpi)_3=(\partial G_3)|_{S_k}$ and  $\pgot=\pgot_{\widehat G_\varpi}=\pgot_G\colon\Hcal_1(\Ncal,\z)\to\r^3$, obtaining an $\I$-invariant conformal minimal immersion $\widetilde G=(\widetilde G_1,\widetilde G_2,\widetilde G_3)\in\Mcal_\I(\overline M_0)$ such that 
$\widetilde G_3=G_3$
and the flux map $\pgot_{\widetilde G}$ of $\widetilde G$ equals the one $\pgot_G$ of $G$.
It is now straightforward to check that, if the approximation of $\widehat G$ by $\widetilde G$ in $S_k$ is close enough, then $H^k:=\widetilde G (B_{\eta(k)}^T)^{-1}\in\Mcal_\I(\overline M_0)$ satisfies properties {\rm (1$_k$)}--{\rm (10$_k$)}. This concludes the construction of the sequence $H^k,\ldots,H^{\igot\jgot}\in\Mcal_\I(\overline M_0)$.

From {\rm (6$_{\igot\jgot}$)} and {\rm (7$_{\igot\jgot}$)}, we obtain that $(H^{\igot\jgot}_1, H^{\igot\jgot}_2)(b\overline W)\cap \overline \Agot=\emptyset$, whereas {\rm (10$_{\igot\jgot}$)} and the Convex Hull property of minimal surfaces ensure that $(H^{\igot\jgot}_1, H^{\igot\jgot}_2)(\overline M)\subset\Dcal$. Since $\Dcal\Subset \Agot$ and $H^{\igot\jgot}$ is $\I$-invariant, then there exists an $\I$-invariant bordered domain $V\Subset \Ncal$ such that
\begin{equation}\label{eq:2V}
M\Subset V\Subset W, \quad \text{$V\setminus \overline M$ consists of $\igot$ open annuli,} 
\end{equation}
and
\begin{equation}\label{eq:2V2}
(H^{\igot\jgot}_1, H^{\igot\jgot}_2)(b\overline V)\subset \Agot\setminus \overline \Bgot.
\end{equation}
In particular, by the Convex Hull property of minimal surfaces,
\begin{equation}\label{eq:2V3}
(H^{\igot\jgot}_1, H^{\igot\jgot}_2)(\overline V)\subset \Agot.
\end{equation}

Set $Y:=H^{\igot\jgot}|_{\overline V}\in\Mcal_\I(\overline V)$ and notice that $Y$ and $V$ satisfy the conclusion of the lemma provided that $\epsilon_0$ is chosen small enough from the beginning. Indeed, Lemma \ref{lem:proper} {\rm (i)} follows from  {\rm (B1)}, {\rm (B2)}, and \eqref{eq:2V}; {\rm (ii)} is implied by {\rm (B1)}, {\rm (B3)}, and {\rm (1$_{\igot\jgot}$)}; {\rm (iii)}=\eqref{eq:2V2}; {\rm (iv)} is ensured by \eqref{eq:2V3}, {\rm (10$_{\igot\jgot}$)}, {\rm (5$_{\igot\jgot}$)}, and the fact that $\overline V\setminus \overline M\subset W\setminus \overline M= \cup_{a=1}^{\igot\jgot} \big(\Omega_{\eta(a)}\cup\I(\Omega_{\eta(a)})\big)$; {\rm (v)} is guaranteed by {\rm (4$_{\igot\jgot}$)} and \eqref{eq:2V}; and {\rm (vi)}={\rm (9$_{\igot\jgot}$)}.

This proves the lemma.
\end{proof}

We can now prove the main result in this section. It will follow from a standard recursive application of Lemma \ref{lem:proper}.

\begin{proof}[Proof of Theorem \ref{th:proper}]
Let $\epsilon_0>0$.

By \eqref{eq:thproper1} and Theorem \ref{th:mink}, we can take a sequence $\{\Dcal^j\}_{j\in\n\cup\{0\}}$ of bounded smooth convex domains in $\r^2$, such that $\Dcal^{j-1}\Subset\Dcal^j\Subset D$ for all $j\in\n$,
\begin{equation}\label{eq:D1}
\big\{\overline \Dcal^j\}\nearrow \overline D,
\end{equation}
and
\begin{equation}\label{eq:D2}
(X_1,X_2)(b\overline U)\subset \Dcal^1\setminus \overline\Dcal^0.
\end{equation}

Call $U_0:=U$, and take also a sequence $\{U_j\}_{j\in\n}$ of Runge connected $\I$-invariant bordered domains in $\Ncal$, satisfying:
\begin{enumerate}[\rm (a)]
\item $U_{j-1}\Subset U_j$ for all $j\in\n$.
\item The Euler characteristic $\chi(U_j\setminus \overline U_{j-1})\in\{0,-2\}$ for all $j\in \n$.
\item $\Ncal=\cup_{j\in\n} U_j$.
\end{enumerate}
Such a sequence is constructed in \cite[Remark 5.8]{AL-RMnon}. 

Call $M_0:=U_0$ and $Y^0=(Y_1^0,Y_2^0,Y_3^0):=X$. Let us construct a sequence $\{(\epsilon_j,M_j,Y^j)\}_{j\in\n}$, where $\epsilon_j>0$, $M_j$ is a Runge connected $\I$-invariant bordered domain in $\Ncal$, and $Y^j=(Y_1^j,Y_2^j,Y_3^j)\in\Mcal_\I(\overline M_j)$ enjoy the following properties:
\begin{enumerate}[\rm (1$_j$)]
\item $M_{j-1}\Subset M_j\Subset U_j$ for all $j\in\n$.
\item The inclusion map $M_j \hookrightarrow U_j$ induces an isomorphism $\Hcal_1(M_j,\z)\to \Hcal_1(U_j,\z)$ for all $j\in\n$.
\item $\|Y^j-Y^{j-1}\|_{1,M_{j-1}}<\epsilon_j<\epsilon_0/2^j$ for all $j\in\n$.
\item $(Y_1^j,Y_2^j)(b\overline M_j)\subset \Dcal^{j+1}\setminus\overline\Dcal^j$ for all $j\in\n$.
\item $(Y_1^j,Y_2^j)(\overline M_j\setminus M_{j-1})\subset \Dcal^j\setminus\overline\Dcal^{j-1}$ for all $j\in\n$.
\item ${\rm dist}_{Y^j}(\overline M_{j-1},b\overline M_j)>1/\epsilon_j>2^j/\epsilon_0$.
\item The flux map $\pgot_{Y^j}$ of $Y^j$ equals $\pgot|_{\Hcal_1(M_j,\z)}$.
\end{enumerate}

Indeed, observe that $M_0$ and $Y^0$ satisfy {\rm (2$_0$)}, {\rm (4$_0$)}, and {\rm (7$_0$)} by the fact $(M_0,Y^0)=(U_0,X)$, \eqref{eq:D2}, and \eqref{eq:thproper2}, respectively, whereas the remaining requirements make no sense for $j=0$. Reason by induction and assume that we already have $(\epsilon_{j-1},M_{j-1},Y^{j-1})$ for some $j\in\n$, satisfying the corresponding properties, and let us construct $(\epsilon_j,M^j,Y^j)$.

Choose $\epsilon_j<\epsilon_0/2^j$.

If the Euler characteristic $\chi(U_j\setminus \overline U_{j-1})=0$, then we directly obtain $M_j$ and $Y^j$ as the resulting data to apply Lemma \ref{lem:proper} to
\[
\big( \Lcal \,,\, \Dcal \,,\, \Bgot \,,\, \Agot \,,\, U \,,\, \widehat U \,,\, X \,,\, \epsilon \big) = \big( \Dcal^{j-1} \,,\, \Dcal^j \,,\, \Dcal^j \,,\, \Dcal^{j+1} \,,\, M_{j-1} \,,\, U_j \,,\, Y^{j-1} \,,\, \epsilon_j \big).
\]

Otherwise, properties {\rm (b)} and {\rm (2$_{j-1}$)} ensure that the Euler characteristic $\chi(U_j\setminus \overline U_{j-1})=\chi(U_j\setminus \overline M_{j-1})=-2$. In this case, by elementary topological arguments, there exists a compact Jordan arc $\gamma\subset U_j$ such that:
\begin{itemize}
\item $\gamma$ has its endpoints in $b\overline M_{j-1}$ and is otherwise disjoint from $\overline M_{j-1}$.
\item $\gamma\cap\I(\gamma)=\emptyset$.
\item $S:=\overline M_{j-1}\cup\gamma\cup\I(\gamma)$ is an $\I$-admissible subset in $\Ncal$ (see Def.\ \ref{def:admi}).
\item The Euler characteristic $\chi(U_j\setminus S_j)=0$.
\end{itemize}
(Cf.\ \cite[Remark 5.8]{AL-RMnon}.)

Extend $Y^{j-1}$ to an $\I$-invariant generalized minimal immersion $\widehat Y=(\widehat Y_1,\widehat Y_2,\widehat Y_3)\in\Mcal_{\ggot,\I}(S)$ satisfying $\widehat Y(\gamma)\subset \Dcal^j\setminus\overline \Dcal^{j-1}$; existence of such extension is guaranteed by {\rm (4$_{j-1}$)}. Applying Theorem \ref{th:Mergelyan} {\rm (I)} to any marked immersion $\widehat Y_\varpi=(\widehat Y,\varpi)\in\Mcal_{\ggot,\I}^*(S)$, such that the generalized flux map $\pgot_{\widehat Y_\varpi}$ of $\widehat Y_\varpi$ equals $\pgot|_{\Hcal_1(U_j,\z)}$ (see \eqref{eq:fluxgene}), we obtain an $\I$-invariant bordered domain $W$ and $F=(F_1,F_2,F_3)\in\Mcal_\I(\overline W)$ meeting the following requirements:
\begin{itemize}
\item $\overline M_{j-1}\subset S\subset W\Subset U_j$ and the Euler characteristic $\chi(U_j\setminus\overline W)=0$.
\item $\|F-Y^{j-1}\|_{1,\overline M_{j-1}}\approx 0$.
\item $(F_1,F_2)(\overline W\setminus M_{j-1})\subset \Dcal^j\setminus\overline \Dcal^{j-1}$.
\item The flux map $\pgot_F$ of $F$ equals $\pgot|_{\Hcal_1(U_j,\z)}$.
\end{itemize}

This reduces the construction of the triple $(\epsilon_j,M_j,Y^j)$ to the already done in the case when $\chi(U_j\setminus\overline U_{j-1})=0$, concluding the construction of the desired sequence $\{(\epsilon_j,M_j,Y^j)\}_{j\in\n}$.

Set
\[
M:=\bigcup_{j\in\n} M_j\subset\Ncal,
\]
which is a Runge $\I$-invariant domain, homeomorphic to $\Ncal$, satisfying $U\Subset M$; take into account properties {\rm (1$_j$)} and {\rm (2$_j$)}, $j\in\n$. By {\rm (3$_j$)}, $j\in\n$, the sequence $\{Y^j\}_{j\in\n}$ converges uniformly in compact subsets of $M$ to an $\I$-invariant conformal minimal immersion $Y=(Y_1,Y_2,Y_3)\in\Mcal_\I(M)$ with $\|Y-X\|_{1,\overline U}<\epsilon$, provided that $\epsilon_j$ is chosen small enough for each $j\in\n$. Furthermore, {\rm (6$_j$)}, $j\in\n$, ensure that $Y$ is complete, whereas \eqref{eq:D1} and {\rm (5$_j$)}, $j\in\n$, imply that $(Y_1,Y_2)(M)\subset D$ and $(Y_1,Y_2)\colon M\to D$ is a proper map. Finally, the flux map $\pgot_Y$ of $Y$ equals $\pgot$ by {\rm (7$_j$)}, $j\in\n$.

This concludes the proof.
\end{proof}



\end{document}